\renewcommand{\paragraph}[1]{\textbf{#1.}}
\DeclareMathOperator*{\argmin}{arg\,min}
\newcommand{\bm}{\boldsymbol}
\newcommand{\dd}{{\mathrm{d}}}
\newcommand{\R}{\mathbb{R}}
\newcommand{\N}{\mathbb{N}}
\newcommand{\bj}{{\bm j}}
\newcommand{\bk}{{\bm k}}
\newcommand{\bx}{{\bm x}}
\newcommand{\bX}{{\bm X}}
\newcommand{\by}{\bm y}
\newcommand{\bY}{\bm Y}
\newcommand{\bu}{{\bm u}}
\newcommand{\bU}{{\bm U}}
\newcommand{\bxi}{\bm\xi}
\newcommand{\bXi}{\bm\Xi}
\newcommand{\mM}{\mathsf M}
\newcommand{\mC}{\mathsf C}
\newcommand{\mL}{\mathsf L}
\renewcommand{\ldots}{\makebox[1em][c]{.\hfil.\hfil.}}
\newcommand{\lpx}[1]{L_\lambda^{#1}}
\newcommand{\lpnormx}[2]{ \big\| {#1} \big\|_{\lpx{#2}(\mathcal{X})} }
\newcommand{\indi}{\mathtt{1}}
\renewcommand{\d}{\mathrm{d}}
\newcommand{\lowsup}[1]{{\raisebox{-2pt}{\scriptsize$#1$}}}
\pgfplotsset{%
every axis/.append style={width=.45\textwidth,
                              axis x line=bottom, axis y line=left,
                              x axis line style={thick,->}, y axis line style={thick,->},
                              tick align=inside, tick style={thick},
                              every x tick label/.style={font=\footnotesize},
                              every y tick label/.style={font=\footnotesize},
                              },
every axis legend/.append style={
                              legend columns=1,
                              font=\footnotesize,
                              draw=none,
                              fill=white,
                              },
every axis x label/.style={at={(0.5,0.1)},below,fill=none,fill opacity=1,text opacity=1},
every axis y label/.style={at={(0.1,0.5)},fill=none,fill opacity=1,text opacity=1,rotate=90},
compat=newest,
}
\title{Self-reinforced polynomial approximation methods for concentrated probability densities}
\author{Tiangang Cui\thanks{School of Mathematics, Monash University, Victoria 3800, Australia \email{tiangang.cui@monash.edu}}
\and Sergey Dolgov\thanks{Department of Mathematical Sciences, University of Bath, Bath, UK \email{s.dolgov@bath.ac.uk}}
\and Olivier Zahm\thanks{Univ. Grenoble Alpes, Inria, CNRS, Grenoble INP, LJK, Grenoble, France \email{olivier.zahm@inria.fr}}
}
\begin{document}

\maketitle

\begin{abstract}
Transport map methods offer a powerful statistical learning tool that can couple a target high-dimensional random variable with some reference random variable using invertible transformations. This paper presents new computational techniques for building the Knothe--Rosenblatt (KR) rearrangement based on general separable functions. We first introduce a new construction of the KR rearrangement---with guaranteed invertibility in its numerical implementation---based on approximating the density of the target random variable using tensor-product spectral polynomials and downward closed sparse index sets. Compared to other constructions of KR arrangements based on either multi-linear approximations or nonlinear optimizations, our new construction only relies on a weighted least square approximation procedure. Then, inspired by the recently developed deep tensor trains  (Cui and Dolgov, \emph{Found. Comput. Math.} 22:1863--1922, 2022), we enhance the approximation power of sparse polynomials by preconditioning the density approximation problem using compositions of maps. This is particularly suitable for high-dimensional and concentrated probability densities commonly seen in many applications. We approximate the complicated target density by a composition of \emph{self-reinforced} KR rearrangements, in which previously constructed KR rearrangements---based on the same approximation ansatz---are used to precondition the density approximation problem for building each new KR rearrangement. We demonstrate the efficiency of our proposed methods and the importance of using the composite map on several inverse problems governed by ordinary differential equations (ODEs) and partial differential equations (PDEs).
\end{abstract}

\begin{keywords}
  {sparse polynomials}, {optimal weighted least squares}, {transport maps}, {deep approximation}, {uncertainty quantification}, {inverse problems}
\end{keywords}

\begin{MSCcodes}
  65D15, 65D32, 65C05, 65C60, 62F15, 65N21, 65L09
\end{MSCcodes}

\setlength{\abovedisplayskip}{4pt}
\setlength{\belowdisplayskip}{4pt}
\setlength{\intextsep}{4pt}
\setlength{\floatsep}{4pt}
\setlength{\textfloatsep}{4pt}
\allowdisplaybreaks


\section{Introduction}

Since the invention of the first general-purpose digital computer ENIAC, a common strategy for simulating complicated high-dimensional random variables is to either deterministically or stochastically transform reference random variables, e.g., Gaussian, into target random variables \cite{metropolis1987beginning}. Classical examples include Markov chain Monte Carlo (MCMC) and importance sampling. The former generates a Markov chain of random variables that asymptotically converges to the target random variable, while the latter characterizes target random variables using reference random variables and weights. The development of optimal transport \cite{villani2008optimal} and related transformation methods offers new insight into this task. Instead of building Markov chains or using weights, one can couple a target random variable $\bX$ taking value in $\mathcal{X} \subseteq \R^d$ with a reference random variable $\bU$ taking value in $\mathcal{U} \subseteq \R^d$  via a mapping  $\mathcal{T}:  \mathcal{U} \rightarrow \mathcal{X}$ such that $\mathcal{T}(\bU) = \bX$. 
Denoting the laws of $\bX$ and $\bU$ by $\nu_\bX$ and $\nu_\bU$, respectively, the pushforward of $\nu_\bU$ under the mapping $\mathcal{T}$, denoted by $\mathcal{T}_\sharp \, \nu_\bU$, is equivalent to $\nu_\bX$.
In this work, we focus on the KR rearrangement \cite{knothe1957contributions,rosenblatt1952remarks}, which is a particular way of constructing such a mapping in the triangular form of
\[
    \mathcal{T}(\bU) =  \left(\begin{array}{l} \mathcal{T}_1(U_1) \\ \mathcal{T}_2(U_1, U_2) \vspace{-4pt}\\ \vdots \\  \mathcal{T}_d(U_1, U_2, \ldots, U_d)  \end{array}\right) .
\]
Here each term $\smash{\mathcal{T}_k : \R^k \rightarrow \R}$ is a scalar-valued multivariate function depending on the first $k$ input variables, so that the image of $(U_1, \ldots, U_k)$ by the first $k$ terms of the KR rearrangement has the same law of the marginal target variable $(X_1, \ldots, X_k)$.

One class of methods for computing the KR rearrangement adopts a \emph{map-from-samples} approach that estimates the map using a set of samples drawn from the target measure $\nu_\bX$. This is typically used in contexts where we do not have access to the target measure, but where realizations of $\bX$ are available. Notable examples include methods presented in \cite{baptista2020adaptive,parno2018transport}, which identify the map $\mathcal{T}$ by minimizing the Kullback--Leibler (KL) divergence of $\nu_\bX$ from $\mathcal{T}_\sharp \, \nu_\bU$ over the family of triangular transformations parametrized by polynomial bases. In this setting, the KL divergence $D_{\rm KL}(\nu_\bX \| \mathcal{T}_\sharp \, \nu_\bU)$ is approximated by the Monte Carlo method using samples drawn from $\nu_\bX$. The normalizing flow methods, e.g., \cite{caterini2021variational,chen2019residualflows,kruse2019hint,papamakarios2021normalizing}, adopt a similar KL minimization problem, but the maps may not be triangular and are often parametrized by neural networks. By and large, the map-from-samples approach is flexible to implement, as it only requires a set of samples drawn from $\nu_\bX$. However, it comes with an $N^{-1/2}$ error rate, where $N$ is the sample size, due to the Monte Carlo estimate of the KL divergence. See \cite{wang2022minimax} and references therein for the analysis.

Another class of methods adopts a \emph{map-from-density} approach that estimates the map using point evaluations of the unnormalized density of $\bX$. This is suitable for situations where the target density can be evaluated, but sampling from it can be difficult. Notable examples include the work of \cite{bigoni2019greedy,el2012Bayesian,spantini2018inference} that finds a triangular map $\mathcal{T}$ by minimizing the \emph{reversed} KL divergence $D_{\rm KL}( \mathcal{T}_\sharp\,\nu_\bU \| \nu_\bX )$.
Although this approach may not suffer from the slow $N^{-1/2}$ error rate of the \emph{map-from-samples} approach, its training procedure is often quite involved in practice---the objective function presents many local minima and each optimization iteration requires repeated evaluations of the target density at transformed reference variables under the candidate map.

Instead of minimizing the reversed KL divergence, \emph{function approximation methods} have also been developed to construct KR rearrangements from unnormalized densities. The idea is to compute the \emph{exact} KR rearrangement of an \emph{approximate} density of the target random variable. For instance, tensor methods are used in \cite{cui2021deep,cui2021conditional,dafs-tt-bayes-2019} to construct the approximate density.
By doing so, this approach bypasses the highly nonlinear optimization procedure, and the associated error analysis is often well established.
In many applications, however, the high-dimensional random variables of interest have concentrated density functions and exhibit complicated nonlinear interactions. In this case, it is often computationally infeasible to directly approximate the target density function with standard approximation tools. Following  \cite{cui2021deep}, one way to circumvent this challenge is to introduce a layered construction technique that builds a composition of KR rearrangements, i.e., $\mathcal{T} = \mathcal{Q}^{(1)} \circ \cdots \circ \mathcal{Q}^{(L)}.$ Each map $\mathcal{Q}^{(\ell)}$ is greedily constructed as the KR rearrangement from the reference random variable $\bU$ to the preconditioned bridging random variable the law $(\mathcal{Q}^{(1)} \circ \cdots \circ \mathcal{Q}^{(\ell-1)})^\sharp\,\nu_{\bX^{(\ell)}}$. Here the bridging random variable $\bX^{(\ell)}$ is chosen such that its density function has lower complexity. The preconditioning is the key to reducing the complexity of the training of KR rearrangements.

\paragraph{Contributions and outline} In this paper, we present generalizations and improvements of the function approximation methods used to build the KR rearrangement from density. Instead of using tensor methods as in the seminal papers \cite{cui2021deep,cui2021conditional}, we propose in Section \ref{sec:sirt} to approximate the target density using a sparse tensor-product basis via a least square formulation. More specifically, we approximate the square root of the unnormalized target density to ensure the \emph{nonnegativity} of the density approximation by squaring the resulting function. The nonnegativity is essential for the numerical stability of the KR rearrangement.
Compared to the squared tensor methods, the computation complexity of KR rearrangements using sparse tensor-product bases is invariant to variable ordering.

While any tensor-product basis can be used in principle, we show in Section \ref{sec:adapt_sirt} that using orthogonal basis functions yields extra sparsity patterns in the computation of the resulting KR rearrangement. Moreover, we consider orthogonal polynomial bases in this work, as they provide analytical expressions that enable fast computations of the KR rearrangement. Borrowing ideas from \cite{migliorati2015adaptive,migliorati2019adaptive}, we also show how to use downward closed index sets and the optimal weighted least square method \cite{cohen2017optimal,hampton2015coherence,migliorati2019adaptive,narayan2017christoffel} to adaptively search for a suitable polynomial basis in high-dimension.

In Section \ref{sec:dirt}, we employ the layered construction technique of \cite{cui2021deep} to design a \emph{self-reinforced} sparse polynomial approximation procedure, where the composition of KR rearrangements built in previous layers is used to precondition the next density approximation problem, and thus leads to a new layer of KR rearrangement. This offers a more efficient KR rearrangement than the single-level construction in terms of both accuracy and complexity. It also offers insights into enriching the approximation power of sparse polynomial methods in general. We provide practical recipes for building composite KR rearrangements using adaptations and various forms of bridging densities. 
In Section \ref{sec:numerics}, we demonstrate the efficiency of our proposed methods on several Bayesian inverse problems governed by ODEs and PDEs. 


\section{Building KR rearrangements by separable function approximation}\label{sec:sirt}

\subsection{Notation}
We consider probability measures that are absolutely continuous with respect to the Lebesgue measure. The law of a random variable $\bX$ is denoted by $\nu_\bX$ and its density is denoted by $f_\bX$.
Given another random variable $\bY\sim \nu_{\bY}$ with density $f_{\bY}$, the Hellinger distance between $\nu_{\bX}$ and $\nu_{\bY}$ is defined by
\begin{equation}\label{eq:Hellinger}
 D_{\rm H}(\nu_{\bX}, \nu_{\bY} ) = \Big(\frac{1}{2} \int \left(\sqrt{f_{\bX}(\bx)}-\sqrt{f_{\bY}(\bx)}\right)^2\d \bx \Big)^\frac12.
\end{equation}

Given a diffeomorphism $\mathcal{S}: \mathcal{X} \rightarrow \mathcal{U}$, the law of $\mathcal{S}(\bX)$ is called the \emph{pushforward} of $\nu_\bX$ under $\mathcal{S}$ and is denoted by $\mathcal{S}_\sharp\,\nu_\bX$.
Similarly, the law of $\mathcal{S}^{-1}(\bU)$ is called the \emph{pullback} of $\nu_\bU$ under $\mathcal{S}$ and is denoted by $\mathcal{S}^\sharp \, \nu_\bU$. Both $\mathcal{S}_\sharp\,\nu_\bX$ and $\mathcal{S}^\sharp \, \nu_\bU$ admit densities given by the change of variable formula
\begin{align}
 \mathcal{S}_\sharp\, f_\bX(\bu)
 &= \big(f_\bX \circ \mathcal{S}^{-1}\big)(\bu) \, \big| \nabla_{\bu} \mathcal{S}^{-1}(\bu)\big| \\
 \mathcal{S}^\sharp\, f_\bU(\bx) &= \big(f_\bU \circ \mathcal{S} \big)(\bx) \, \big| \nabla_{\bx} \mathcal{S}(\bx)\big|, \label{eq:pullbackDensity}
\end{align}
for all $\bu,\bx\in\R^d$, where $|\cdot|$ denotes the determinant of a matrix.

We denote $\mathbb{N}$ the set of all positive integers and $\mathbb{N}_0 = \mathbb{N} \cup \{0\}$. For $d \in \mathbb{N}$, we use boldface letters to denote $d$-dimensional vectors and multi-indices. For example, $\bk = (k_1, k_2, \ldots, k_d) \in \mathbb{N}_0^d$ and $\bx = (x_1, x_2, \ldots, x_d) \in \R^d$. Also, let $\bm 0 = (0, 0, \ldots, 0) \in \mathbb{N}_0^d$, $\bm 1 = (1, 1, \ldots, 1) \in \mathbb{N}^d$, and let ${\bm e}_j  = (0, \ldots, 1, \ldots, 0)$ be the $j$-th unit vector in $\R^d$.
The cardinality of a set $\mathcal{K}$ is denoted by $|\mathcal{K}|$. 

Given a $d$-dimensional space, we denote the full coordinate index set by $[d] = \{1, \ldots, d\} \subset \mathbb{N}$. Given a subset of coordinate indices $\mathfrak{u} = (u_1, \ldots, u_s) \subseteq [d]$ with cardinality $|\mathfrak{u}| = s \leq d$, we let $\mathfrak{u}^c = [d] \setminus \mathfrak{u}$ be the complementary subset. This way, we can also define $\bx_{\mathfrak{u}} = ( x_{u_1}, \ldots, x_{u_s}) \in \R^{|\mathfrak{u}|}$, $\mathcal{X}_{\mathfrak{u}} = \mathcal{I}^{|\mathfrak{u}|}$. For any scalar-value function $h:\R\rightarrow\R$, we write $h(\bx_{\mathfrak{u}}) = \prod_{i \in \mathfrak{u}} h(x_i)$.

Given a matrix $\mathsf{A} \in \R^{n\times d}$ and coordinate index sets $\mathfrak{u} \subseteq [n]$, $\mathfrak{v} \subseteq [d]$ with cardinalities $|\mathfrak{u}| = s \leq n$, $|\mathfrak{v}| = t \leq d$.  We let $\mathsf{A}_{\mathfrak{u}:} \in \R^{s \times d}$ and $\mathsf{A}_{:\mathfrak{v}} \in \R^{n \times t}$ be the matrices consisting of a subset of rows of $\mathsf{A}$ indexed by $\mathfrak{u}$ and a subset of columns of $\mathsf{A}$ indexed by $\mathfrak{v}$, respectively. Similarly,  $\mathsf{A}_{\mathfrak{u}\mathfrak{v}} \in \R^{s \times t}$ represents the submatrix  of $\mathsf{A}$ with  rows  indexed by $\mathfrak{u}$ and columns indexed by $\mathfrak{v}$. 

For a multi-index set $\mathcal{K} \subset \mathbb{N}_0^d$, we define the function $\verb|matrix|(\mathcal{K})$ to convert $\mathcal{K}$ into a matrix $\mathsf{K} = \verb|matrix|(\mathcal{K})$ such that $\mathsf{K} \in \mathbb{N}_0^\lowsup{|\mathcal{K}| \times d}$. We also introduce a mapping $\sigma : \mathcal{K} \rightarrow [|\mathcal{K}|]$, where $[|\mathcal{K}|] = \{1, 2, \ldots, |\mathcal{K}|\}$, such that $\sigma(\bk) \in [|\mathcal{K}|]$ gives the scalar-valued index of the multi-index $\bk$ the in the ordered set $\mathcal{K}$. Under this notation, for any $\bk\in\mathcal{K}$, the row $\mathsf{K}_{\sigma(\bk):} = (k_1,\hdots,k_d)$ contains the integers of the multi-index $\bk$.

All multi-dimensional spaces used in this paper are Cartesian products spaces equipped with product-form, normalized weight functions. For example, for $\mathcal{X} \subseteq \R^d$ we have  $\mathcal{X} =  \mathcal{I}^d$ where $\mathcal{I} \subseteq \R$ and the associated weight function takes the form $\lambda(\bx) = \prod_{i = 1}^{d} \lambda(x_i)$ such that $\int_{\mathcal{I}} \lambda(x_i) \dd x_i = 1$ for $i = 1, \ldots, d$. On the interval $\mathcal{I}$, we denote the inner product and weighted inner product by
\[
\langle f,g\rangle = \int_\mathcal{I} f(x)g(x) \dd x \quad \text{and} \quad \langle f,g\rangle_{\lambda} = \int_\mathcal{I} f(x)g(x) \lambda(x)\dd x,    
\]
respectively. The norms induced by these inner products are denoted by $\|f\|_{L^2(\mathcal{X})}:=\langle f,f\rangle^{1/2}$ and $\|f\|_{L_\lambda^2(\mathcal{X})}:=\langle f,f\rangle_{\lambda}^{1/2}$.

\subsection{KR rearrangement}\label{sec:KR}
The basic idea behind the KR rearrangement is the fundamental property of the distribution function: given a \emph{scalar} random variable $X$ with density $f_X$, its distribution function $\mathcal{F}_X(x)=\int_{-\infty}^x f_X(x^\prime)\d x^\prime$ maps the random variable $X$ into a random variable uniformly distributed on the interval $[0,1]$. To extend this to \emph{vector-valued} random variables, we factorize the multivariate density function $f_\bX$ as the product of one-dimensional conditional marginals
\begin{equation}\label{eq:productOfCondMarg}
 f_\bX(\bx) = f_{X_1}(x_1) f_{X_2|X_1}(x_2|x_1) \hdots f_{X_d|\bX_{[d-1]}}(x_d|\bx_{[d-1]}),
\end{equation}
where the conditional marginals and the marginals are respectively defined by
\begin{align}
 f_{X_t|\bX_{[t-1]}}(x_t|\bx_{[t-1]}) &= \frac{ f_{\bX_{[t]}}(\bx_{[t-1]},x_t)}{ f_{\bX_{[t-1]}}(\bx_{[t-1]}) }, \\
 f_{\bX_{[t]}}(\bx_{[t]}) &= \int f_{\bX}(\bx_{[t]},\bx_{[t]^c}^\prime) \d \bx_{[t]^c}^\prime .\label{eq:marginal111}
\end{align}
Following \cite{rosenblatt1952remarks}, we let $\mathcal{F}_{\bX}: \mathcal{X} \mapsto [0,1]^d$ be defined as
\begin{equation}
    \mathcal{F}_{ \bX}(\bx) =
    \left(\begin{array}{l}
     \mathcal{F}_{ X_1}(x_1 ) \\
     \mathcal{F}_{ X_{2} |  X_{1}}(x_2 | x_{1}) \vspace{-4pt}\\
     \vdots \\
      \mathcal{F}_{ X_{d} |  \bX_{[d{-}1]}}(x_d | \bx_{[d{-}1]})
    \end{array}\right),
\label{eq:Rosenblatt111}
\end{equation}
where each components are the univariate conditional distribution function
\begin{equation}
 \mathcal{F}_{ X_{t} |  \bX_{[t{-}1]}}(x_t | \bx_{[t{-}1]}) = \int_{-\infty}^{x_t}  f_{ X_{t} | \bX_{[t{-}1]}}(x_t^\prime | \bx_{[t{-}1]}) \dd x_t^\prime .\label{eq:conditional111}
\end{equation}
By non-negativity of the conditional marginal, each component $\mathcal{F}_{ X_{t} |  \bX_{[t{-}1]}}$ is an non-decreasing function of the $t$-th variable, meaning $\partial_t \mathcal{F}_{ X_{t} |  \bX_{[t{-}1]}}(x_t | \bx_{[t{-}1]})\geq0$. In addition, $\mathcal{F}_{\bX}$ is \emph{lower-triangular} in the sense that the $t$-th component is a scalar-valued function depending on only the first $t$ variables $\bx_{[t]}$.
The map $\mathcal{F}_{\bX}$ satisfies 
\[
    \mathcal{F}_{\bX}^\sharp  f^{}_{\bXi}(\bx)
    = \big(f_{\bXi} \circ \mathcal{F}_{\bX} \big)(\bx)\,\big| \nabla_{\bx} \mathcal{F}_{\bX}(\bx) \big|  = \big| \nabla_{\bx} \mathcal{F}_{\bX}(\bx) \big|
    = f_{\bX}(\bx),
\]
where $\bXi$ is the uniform random variable on $[0,1]^d$.
This shows that $\bXi = \mathcal{F}_{\bX}(\bX)$, which is equivalent to $\bX = \mathcal{F}_{\bX}^{-1}(\bXi)$. The inverse map $\mathcal{F}_{\bX}^{-1}$ is also \emph{lower-triangular} and $\bx=\mathcal{F}_{\bX}^{-1}(\bxi)$ can be evaluated as
\begin{equation}\label{eq:inverse_trans}
\mathcal{F}_{\bX}^{-1}(\bxi) 
= \big[
\mathcal{F}_{X_1}^{-1}(\xi_1) , \;
\mathcal{F}_{X_2|X_1}^{-1}(\xi_2|x_1), \; \cdots, \;
\mathcal{F}_{ X_{d} | \bX_{[d{-}1]}}^{-1}(\xi_d | \bx_{[d{-}1]})\big]{}^\top,
\end{equation}
where $\mathcal{F}_{\smash{X_t|\bX_{[t-1]}}}^{-1}\!(\cdot|\bx_{[t-1]})$ denotes the inverse function of $x_t\mapsto\mathcal{F}_{\smash{X_t|\bX_{[t-1]}}}(x_t|\bx_{[t-1]})$. We assume that the conditional marginals $f_{X_t|\bX_{[t-1]}}$ are positive almost surely, so that $x_t\mapsto\mathcal{F}_{X_t|\bX_{[t-1]}}(x_t|\bx_{t-1})$ is strictly monotone and thus admits an inverse function.

\begin{remark}
The above construction leads to a KR rearrangement $\mathcal{T}_\bX(\bU) = \bX$ with a uniform reference variable $\bU$. To use a general reference random variable $\bU$, we can define the composite map, 
\(\mathcal{T}_\bX^{} =\mathcal{F}_\bX^{-1}\circ \mathcal{F}_{\bU}^{},\)
where $\mathcal{F}_{\bU}$ is a low-triangular map defined in \eqref{eq:productOfCondMarg}--\eqref{eq:conditional111}. The resulting map $\mathcal{T}_\bX$ is still lower-triangular.
\end{remark}

A bottleneck in constructing the KR rearrangement for high-dimensional random variables is to evaluate the integrals in the marginal densities \eqref{eq:marginal111}. Our strategy is to approximate the density $f_\bX$ using a structured approximation class, which permits the analytical computation of the marginals of the approximate density. This way, the corresponding approximate KR rearrangement can be evaluated explicitly.

\subsection{Approximating the target density}\label{sec:sqrt_approx}

We propose to approximate the target density function $f_\bX$ by a density in the form of
\begin{equation}\label{eq:approx_density}
f_{\hat\bX}(\bx) = \frac{1}{\hat z} \left( \gamma + g(\bx)^2\right) \, \lambda(\bx),\quad \hat z = \gamma + \int_\mathcal{X} g(\bx)^2 \lambda(\bx) \dd \bx .
\end{equation}
Here, $\lambda(\bx)=\lambda(x_1)\hdots\lambda(x_d)$ is a product function of positive weight such that $\int_\R\lambda(x)\d x =1$, $\gamma > 0$ is a scalar and $g\in L^2_\mu(\mathcal{X})$ is a square integrable function. By construction, the density as in \eqref{eq:approx_density} is everywhere positive $f_{\hat\bX}(\bx)>0$.
In practice, we build $f_{\hat\bX}$ as follow.
First we chose $\lambda$ with sufficiently heavy tails such that
\begin{equation}\label{eq:tail}
 \sup_{\bx\in\mathcal{X}} \frac{f_{\bX}(\bx)}{\lambda(\bx)} <\infty ,
\end{equation}
is bounded. In particular, this implies that the ratio of the densities $f_{\bX}(\bx)/f_{\hat\bX}(\bx)$ is bounded from above, which is a sufficient condition to ensures that $f_{\hat\bX}$ can be used as a robust importance sampling density, see \cite[Chapter 9]{Owen_2013}.
Given such a weight function, we denote by $\Phi(\bx)$ the unique (up to an additive constant) potential function such that the target density function writes
\begin{equation}\label{eq:target}
f_\bX(\bx) = \frac{1}{z} \, \exp(-\Phi(\bx))\,\lambda(\bx), \quad z = \int_{\mathcal{X}} \exp(-\Phi(\bx))\,\lambda(\bx) \dd \bx.
\end{equation}
Here, $\bx\mapsto\Phi(\bx)$ can be pointwise evaluated, but the normalizing constant $z$ might not be known in advance.
Next, we build $g$ as an approximation of the square-root of $\exp(-\Phi(\bx))$ by solving the following least-square problem
\begin{equation}\label{eq:approx}
    g = \argmin_{h \in \mathcal{V}_{\mathcal{K}}} \lpnormx{\exp(-{\textstyle\frac12}\Phi) - h}{2} .
\end{equation}
Here, $\mathcal{V}_{\mathcal{K}}\subset L^2_\mu(\mathcal{X})$ is a subspace of functions defined as the span of tensor-product multivariate basis functions, i.e.,
\begin{equation}\label{eq:Vk}
  \mathcal{V}_{\mathcal{K}} = \text{span}\{ \psi_{\bk} \,:\, \bk\in\mathcal{K}  \},
  \quad \psi_{\bk}:\bx\mapsto \prod_{i = 1}^d \psi_{k_i}(x_i),
\end{equation}
where $\mathcal{K} \subset \mathbb{N}_0^d$ is a multi-index set and each $\psi_{k_i}$ is a univariate function indexed by $k_i \in \mathbb{N}_0$.
As explained in the next Section \ref{sec:EvaluatingKR}, the tensor product structure \eqref{eq:Vk} permits to compute analytically any marginals of $f_{\hat\bX}$, which permits evaluating the corresponding KR rearrangement $\mathcal{F}_{\hat \bX}$.
We defer the definition of the functions $\psi_k$ to Section \ref{sec:Psi} and the construction of the index set $\mathcal{K}$ to Section \ref{sec:AdaptiveLS}.
Finally, we $\gamma>0$ to be any positive constant such that $\sqrt{\gamma} \leq \| \exp(-\textstyle\frac12\Phi)-g \|_{L_\lambda^2(\mathcal{X})}$.
The following proposition, which is shown in \cite[Section 3.3]{cui2021deep}, establishes the connection between the error in approximating $\exp(-\frac12\Phi)$ and the error of the random variable $\hat\bX$ as an approximation of the target random variable $\bX$.

\begin{proposition}\label{prop:l2}
    Let $\lambda:\R^d\rightarrow\R_{\geq0}$ be such that $\int_{\R^d}\lambda(\bx) \dd \bx=1$.
    Assume that the approximation $g \in L^2_\lambda(\R^d)$ satisfies $\| \exp(-\textstyle\frac12\Phi)-g \|_{L_\lambda^2(\mathcal{X})} \leq \epsilon$ for some $\epsilon \geq 0$, and the constant $\gamma\geq0$ satisfies
    \(
        \sqrt{\gamma} \leq \epsilon .
    \)
    Then the random variable $\hat\bX$ with density $f_{\hat\bX}$ defined in \eqref{eq:approx_density} satisfies $D_{\rm H}(\nu_{\hat\bX}, \nu_{\bX {\vphantom{\hat\bX}}} ) \leq 2\epsilon / \sqrt z$.
\end{proposition}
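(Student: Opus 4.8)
The plan is to rewrite the Hellinger distance \eqref{eq:Hellinger} as an $L^2_\lambda(\mathcal{X})$ distance between the two normalized square-root densities, and then to control that distance by the least-squares residual $\epsilon$ together with $\gamma$. Setting $p:=\exp(-\tfrac12\Phi)$ and $q:=\sqrt{\gamma+g^2}$, the densities \eqref{eq:target} and \eqref{eq:approx_density} read $f_\bX = z^{-1}p^2\lambda$ and $f_{\hat\bX} = \hat z^{-1}q^2\lambda$, so that $\sqrt{f_\bX}=z^{-1/2}p\sqrt\lambda$ and $\smash{\sqrt{f_{\hat\bX}}}=\hat z^{-1/2}q\sqrt\lambda$. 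Substituting into \eqref{eq:Hellinger} and factoring out $\lambda$ gives
\[
 2\, D_{\rm H}(\nu_{\hat\bX},\nu_\bX)^2 = \Big\| \tfrac{p}{\sqrt z} - \tfrac{q}{\sqrt{\hat z}} \Big\|_{L^2_\lambda(\mathcal{X})}^2 .
\]
Since $z=\lpnormx{p}{2}^2$ and $\hat z=\lpnormx{q}{2}^2$ by the definitions of the normalizing constants, the two arguments above are precisely the $L^2_\lambda$-normalizations of $p$ and $q$, so the problem reduces to estimating the distance between two unit vectors.

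The next step I would use is a generic Hilbert-space inequality: for nonzero $u,v$,
\[
 \Bigl\| \tfrac{u}{\|u\|} - \tfrac{v}{\|v\|} \Bigr\| \le \frac{2\,\|u-v\|}{\|u\|},
\]
which follows by adding and subtracting $v/\|u\|$ and invoking the triangle and reverse-triangle inequalities. Applying it with $u=p$, $v=q$ and $\|u\|=\sqrt z$ yields $D_{\rm H}(\nu_{\hat\bX},\nu_\bX) \le (\sqrt 2/\sqrt z)\,\lpnormx{p-q}{2}$.

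It then remains to show $\lpnormx{p-q}{2}\le\sqrt2\,\epsilon$, and this is the step I expect to be the crux, since it is where the specific form $q=\sqrt{\gamma+g^2}$ must be exploited. The key is the pointwise identity $(p-q)^2-(p-g)^2 = \gamma - 2p\bigl(\sqrt{\gamma+g^2}-g\bigr)$, obtained by expanding both squares. Because $p=\exp(-\tfrac12\Phi)\ge0$ and $\sqrt{\gamma+g^2}\ge|g|\ge g$, the cross-term is nonpositive, so $(p-q)^2\le(p-g)^2+\gamma$ everywhere on $\mathcal{X}$. Integrating against $\lambda$, using $\int_\mathcal{X}\lambda=1$ together with the hypotheses $\lpnormx{p-g}{2}\le\epsilon$ and $\gamma\le\epsilon^2$, gives $\lpnormx{p-q}{2}^2\le\epsilon^2+\gamma\le2\epsilon^2$. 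Combining with the previous estimate produces $D_{\rm H}(\nu_{\hat\bX},\nu_\bX)\le 2\epsilon/\sqrt z$.

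The only genuine obstacle is the sign argument in the last paragraph: one must recognize that the nonnegativity of $p$ and the inequality $\sqrt{\gamma+g^2}\ge g$ make the cross-term harmless, so that replacing the least-squares approximant $g$ by the strictly positive $\sqrt{\gamma+g^2}$ costs only an additive $\gamma$ rather than an uncontrolled factor. Everything else is routine bookkeeping with norms and the elementary unit-vector inequality.
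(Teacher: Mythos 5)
Your proof is correct, and every step checks out: the rewriting of $2D_{\rm H}^2$ as the squared $L^2_\lambda$-distance between the normalized square roots, the unit-vector inequality $\bigl\|u/\|u\|-v/\|v\|\bigr\|\le 2\|u-v\|/\|u\|$, and the pointwise estimate $(p-q)^2\le (p-g)^2+\gamma$ (valid because $p\ge 0$ and $\sqrt{\gamma+g^2}\ge g$) together give exactly $D_{\rm H}\le \sqrt2\cdot\sqrt{2}\,\epsilon/\sqrt z = 2\epsilon/\sqrt z$. Note that the paper itself does not prove this proposition; it defers to \cite[Section 3.3]{cui2021deep}, so your argument serves as a self-contained replacement, and it rests on the same two ingredients that the cited proof uses: a normalization (unit-vector) inequality contributing the factor $2/\sqrt z$, and a bound on the effect of the defensive constant $\gamma$.

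One point worth emphasizing, since you flagged it yourself as the crux: the quadratic pointwise estimate is genuinely needed to recover the stated constant. The more obvious route---triangle inequality $\|p-q\|\le\|p-g\|+\|g-q\|$ together with $|\sqrt{\gamma+g^2}-|g||\le\sqrt\gamma$ and $\|p-|g|\,\|\le\|p-g\|$---only yields $\|p-q\|\le\epsilon+\sqrt\gamma\le 2\epsilon$ and hence the weaker bound $2\sqrt2\,\epsilon/\sqrt z$. Your expansion $(p-q)^2-(p-g)^2=\gamma-2p\bigl(\sqrt{\gamma+g^2}-g\bigr)\le\gamma$ exploits the sign of the cross term and gives $\|p-q\|^2\le\epsilon^2+\gamma\le 2\epsilon^2$, which is what produces the constant $2$ in the proposition. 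The only implicit assumption, shared with the statement itself, is that $\hat z>0$ so that $f_{\hat\bX}$ and the normalization of $q$ are well defined; this is automatic whenever $\gamma>0$, as the paper requires in its construction.
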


Since $\sqrt{z} = \| \exp(-\textstyle\frac12\Phi)\|_{L_\lambda^2(\mathcal{X})}$, for an approximation $g$ with a relative error 
\[
    \frac{\| \exp(-\textstyle\frac12\Phi)-g \|_{L_\lambda^2(\mathcal{X})}}{\| \exp(-\textstyle\frac12\Phi)\|_{L_\lambda^2(\mathcal{X})}} \leq \tau 
\]
for some $\tau \geq 0$ and $\gamma \leq \tau^2 \| \exp(-\textstyle\frac12\Phi)\|_{L_\lambda^2(\mathcal{X})}^2$, Proposition \ref{prop:l2} also gives that the Hellinger distance between the approximate random variable and the target random variable satisfies $D_{\rm H}(\nu_{\hat\bX}, \nu_{\bX {\vphantom{\hat\bX}}} ) \leq 2\tau$.

\subsection{Evaluating the approximate KR rearrangement \texorpdfstring{$\mathcal{F}_{\smash{\hat \bX}}$}{}}\label{sec:EvaluatingKR}
Evaluating the KR rearrangement of the approximate random variable $\hat \bX$ requires marginals of $\hat \bX$. A naive approach computes integrals of $g(\bx)^2$ in the approximate density \eqref{eq:approx_density} over the corresponding variables, which can be computationally inefficient.
In the following proposition, we exploit the product structure of the basis $\psi_{\bk}$ to derive a simple expression for the marginals of $\hat \bX$.
\begin{proposition}\label{prop:marginal}
Let $g \in \mathcal{V}_{\mathcal{K}}$ take the form
\begin{equation}\label{eq:approx_g}
    g(\bx) = \sum_{\bk \in \mathcal{K}} c_{\bk} \psi_{\bk}(\bx),
\end{equation}
for some real-valued coefficients $(c_\bk)_{\bk\in\mathcal{K}}$. For $1\leq i \leq d$, we define the mass matrices $\mM^\lowsup{(i)} \in \R^\lowsup{|\mathcal{K}| \times |\mathcal{K}|}$ as
\begin{equation}\label{eq:Mi}
\mM^{(i)}_{\sigma(\bj) \sigma(\bk)} = \int_{\mathcal{I}} \psi_{j_i}(x_i) \psi_{k_i}(x_i) \lambda(x_i) \dd x_i,
 \quad \bj,\bk \in \mathcal{K} .
\end{equation}
Given a coordinate index set $\mathfrak{q} = (q_1, \ldots, q_s) \subseteq [d]$ with cardinality $s \leq d$, we define the accumulated mass matrix
\(
\mM^\lowsup{(\mathfrak{q})} = \mM^\lowsup{(q_1)}\circ \cdots \circ \mM^\lowsup{(q_s)},
\)
where $\circ$ is the Hadamard product. Considering the rank-revealing factorization $\mM^\lowsup{(\mathfrak{q})}_{\sigma(\bj) \sigma(\bk)} = \sum_{\ell = 1}^{r(\mathfrak{q})} \mL^\lowsup{(\mathfrak{q})}_{\sigma(\bj) \ell}   \mL^\lowsup{(\mathfrak{q})}_{\sigma(\bk) \ell}$, where $r({\mathfrak{q}})$ is the rank of $\mM^\lowsup{({\mathfrak{q}})}$, the density of the marginal random variable $\hat\bX_{\mathfrak{q}^c}$ is
\begin{align}
    f_{{\smash{\hat \bX}}_{\mathfrak{q}^c}}(\bx_{\mathfrak{q}^c}) = \int_{\mathcal{X}_{\mathfrak{q}}} f_{{\smash{\hat \bX}}}(\bx) \dd \bx_{\mathfrak{q}} = \frac{1}{\hat z}\big( \gamma + p(\bx_{\mathfrak{q}^c}) \big) \lambda(\bx_{\mathfrak{q}^c}).
\end{align}
where
\begin{equation}\label{eq:accum_M}
p(\bx_{\mathfrak{q}^c}) = \sum_{\ell = 1}^{r({\mathfrak{q}})}  \Big( \sum_{\bk \in \mathcal{K}}  c_{\bk}  \mL^{({\mathfrak{q}})}_{\sigma(\bk)\ell}  \prod_{i \in {\mathfrak{q}}^c}  \psi_{k_i}(x_i) \Big)^2, \quad \hat z = \gamma + \hspace{-8pt}\sum_{\bj \in \mathcal{K}, \bk \in \mathcal{K}} \hspace{-8pt} c_{\bj} \,c_{\bk} \,\mM^{([d])}_{\sigma(\bj) \sigma(\bk)} .
\end{equation}
\end{proposition}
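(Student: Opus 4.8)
The plan is a direct computation that exploits the product structure of both the weight $\lambda$ and the basis $\psi_{\bk}$, which reduces the multivariate integral over $\bx_{\mathfrak{q}}$ to products of one-dimensional integrals. First I would substitute the approximate density \eqref{eq:approx_density} into the marginalization integral $\int_{\mathcal{X}_{\mathfrak{q}}} f_{\hat\bX}(\bx)\,\dd\bx_{\mathfrak{q}}$ and split it into the constant part and the quadratic part, factoring the weight as $\lambda(\bx)=\lambda(\bx_{\mathfrak{q}})\lambda(\bx_{\mathfrak{q}^c})$. Using the normalization $\int_{\mathcal{I}}\lambda(x_i)\,\dd x_i=1$, the constant part integrates to $\gamma\lambda(\bx_{\mathfrak{q}^c})$, which already supplies the $\gamma$ appearing in the claimed marginal.

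The core step is the quadratic part. Expanding $g^2=\sum_{\bj,\bk\in\mathcal{K}}c_{\bj}c_{\bk}\,\psi_{\bj}\psi_{\bk}$ and using the tensor-product factorization $\psi_{\bj}(\bx)\psi_{\bk}(\bx)=\prod_{i=1}^d\psi_{j_i}(x_i)\psi_{k_i}(x_i)$, the integral over $\bx_{\mathfrak{q}}$ separates across coordinates. Each integrated coordinate $i\in\mathfrak{q}$ contributes exactly the one-dimensional mass entry $\mM^{(i)}_{\sigma(\bj)\sigma(\bk)}$ from \eqref{eq:Mi}, while each retained coordinate $i\in\mathfrak{q}^c$ leaves the factor $\psi_{j_i}(x_i)\psi_{k_i}(x_i)$. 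The key algebraic identification is that the product over $i\in\mathfrak{q}$ of these scalar entries is precisely the entrywise product $\mM^{(\mathfrak{q})}_{\sigma(\bj)\sigma(\bk)}=\prod_{i\in\mathfrak{q}}\mM^{(i)}_{\sigma(\bj)\sigma(\bk)}$, which is the definition of the accumulated (Hadamard) mass matrix.

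Then I would insert the rank-revealing factorization $\mM^{(\mathfrak{q})}_{\sigma(\bj)\sigma(\bk)}=\sum_{\ell=1}^{r(\mathfrak{q})}\mL^{(\mathfrak{q})}_{\sigma(\bj)\ell}\mL^{(\mathfrak{q})}_{\sigma(\bk)\ell}$ and interchange the order of summation, so that the double sum over $\bj,\bk$ decouples into a product of two identical linear forms indexed by $\ell$. This rewrites the bilinear expression as the sum of squares $p(\bx_{\mathfrak{q}^c})=\sum_{\ell}\big(\sum_{\bk}c_{\bk}\mL^{(\mathfrak{q})}_{\sigma(\bk)\ell}\prod_{i\in\mathfrak{q}^c}\psi_{k_i}(x_i)\big)^2$, so the quadratic part equals $p(\bx_{\mathfrak{q}^c})\lambda(\bx_{\mathfrak{q}^c})$. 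Combining the two parts and factoring out $1/\hat z$ yields the claimed marginal. The formula for $\hat z$ then follows from the same computation in the special case $\mathfrak{q}=[d]$, where $\mathfrak{q}^c=\emptyset$, the marginal integral equals $1$, and $\prod_{i=1}^d\mM^{(i)}_{\sigma(\bj)\sigma(\bk)}=\mM^{([d])}_{\sigma(\bj)\sigma(\bk)}$.

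There is no serious analytic obstacle here, since every integral is elementary once the product structure is invoked; the main difficulty is purely organizational, namely carefully tracking which coordinates are integrated versus retained and recognizing that the product of one-dimensional mass entries over $\mathfrak{q}$ is exactly the Hadamard product defining $\mM^{(\mathfrak{q})}$. The one genuinely structural point, rather than bookkeeping, is that the rank factorization re-expresses the marginal again in the nonnegative squared-plus-$\gamma$ form, which is precisely what allows the KR construction to be iterated. I would therefore note at the outset that each $\mM^{(i)}$ is a Gram matrix and hence symmetric positive semidefinite, so by the Schur product theorem $\mM^{(\mathfrak{q})}$ is also positive semidefinite, guaranteeing that a real factorization of the stated form exists.
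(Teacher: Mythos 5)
Your proposal is correct and follows essentially the same route as the paper's proof: substitute the density, split off the $\gamma$ term via the normalized product weight, expand $g^2$ and separate integrated from retained coordinates to recognize the Hadamard-product entry $\mM^{(\mathfrak{q})}_{\sigma(\bj)\sigma(\bk)}$, insert the rank-revealing factorization and swap summation order to get the sum of squares, with $\hat z$ obtained from the same computation over all coordinates. Your closing remark that each $\mM^{(i)}$ is a positive semidefinite Gram matrix, so the Schur product theorem guarantees a real factorization of $\mM^{(\mathfrak{q})}$ exists, is a worthwhile addition the paper leaves implicit, but it does not alter the argument.
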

\begin{proof}
    Since the approximate target density is $f_{\smash{\hat \bX}}(\bx) = \frac{1}{\hat z} \left( \gamma + g(\bx)^2\right) \, \lambda(\bx)$ and the weight function $\lambda(\bx)$ is normalized and of a product form, the density of $\hat\bX_{\mathfrak{q}^c}$ is
    \[
        f_{{\smash{\hat \bX}}_{\mathfrak{q}^c}}(\bx_{\mathfrak{q}^c}) = \frac{1}{\hat z} \int_{\mathcal{X}_{\mathfrak{q}}} \big( \gamma + g(\bx)^2\big) \, \lambda(\bx) \dd \bx_{\mathfrak{q}} = \frac{1}{\hat z} \Big( \gamma + \int_{\mathcal{X}_{\mathfrak{q}}}  g(\bx)^2 \lambda(\bx_\mathfrak{q}) \dd \bx_{\mathfrak{q}} \Big) \lambda(\bx_{\mathfrak{q}^c}).
    \]
    The marginal function $p(\bx_{{\mathfrak{q}}^c}) = \int_{\mathcal{X}_{\mathfrak{q}}}  g(\bx)^2 \lambda(\bx_\mathfrak{q}) \dd \bx_{\mathfrak{q}}$ can be expressed as
    \begin{align*}
    p(\bx_{{\mathfrak{q}}^c}) & = \int_{\mathcal{X}_{\mathfrak{q}}}\Big(\sum_{\bj \in \mathcal{K}} c_{\bj} \prod_{i = 1}^d \psi_{j_i}(x_i) \Big) \Big(\sum_{\bk \in \mathcal{K}} c_{\bk} \prod_{i = 1}^d \psi_{k_i}(x_i) \Big) \prod_{i \in {\mathfrak{q}}} \lambda(x_i)  \dd \bx_{\mathfrak{q}} \\
    & = \sum_{\bj, \bk \in \mathcal{K}} c_{\bj} \, c_{\bk}  \Big( \prod_{i \in {\mathfrak{q}}^c} \psi_{j_i}(x_i)  \psi_{k_i}(x_i) \Big) \Big(\prod_{i \in {\mathfrak{q}}} \int_{\mathcal{I}} \psi_{j_i}(x_i) \psi_{k_i}(x_i) \lambda(x_i) \dd x_i \Big)                                               \\
    & = \sum_{\bj , \bk \in \mathcal{K}} c_{\bj} \, c_{\bk}  \Big( \prod_{i \in {\mathfrak{q}}^c} \psi_{j_i}(x_i)  \psi_{k_i}(x_i) \Big) \;
    \Big( \sum_{\ell = 1}^{r({\mathfrak{q}})} \; \mL^{({\mathfrak{q}})}_{\sigma(\bj)\ell} \mL^{({\mathfrak{q}})}_{\sigma(\bk)\ell} \Big) \\
    & = \sum_{\ell = 1}^{r({\mathfrak{q}})} \Big( \sum_{\bk \in \mathcal{K}}  c_{\bk}  \mL^{({\mathfrak{q}})}_{\sigma(\bk)\ell}  \prod_{i \in {\mathfrak{q}}^c}  \psi_{k_i}(x_i) \Big)^2 .
    \end{align*}
    The constant $\hat z$ is the integration of $(\gamma + g(\bx)^2) \lambda(\bx)$ over all coordinates, and thus its expression can also be obtained using the above formula. This concludes the result.
\end{proof}

\begin{remark}[Variable ordering]
As suggested by Proposition \ref{prop:marginal}, we can obtain marginal density for any subset of variables.
Thus, given an arbitrary variable ordering $\mathfrak{p}=(p_1,\ldots,p_d)$, we can use the above defined procedure to evaluate the KR rearrangement associated with the reordered variable $\hat\bX_{\mathfrak{p}} = (X_{p_1},\ldots,X_{p_d})$. The computational cost is invariant to variable ordering. This is more flexible than the squared-tensor-train methods of \cite{cui2021deep,cui2021conditional}, where marginalizing variables in arbitrary order can significantly increase the computational complexity.

\end{remark}

\section{Adaptive least square approximation in polynomial spaces}\label{sec:adapt_sirt}
The procedure outlined in the previous section can be used with any product-form basis functions. For example, Gaussian process and radial basis functions \cite{buhmann2003radial,wendland2004scattered,williams2006gaussian} with product-form kernels, multivariate wavelet and Fourier bases \cite{daubechies1992ten,de1993construction,devore1988interpolation,dohler2010nonequispaced,gradinaru2007fourier,mallat1989multiresolution}, and sparse spectral polynomials  \cite{shen2010sparse,shen2011spectral,xiu2015stochastic,xiu2002wiener}. Here we consider multivariate basis functions $(\psi_\bk)_{\bk\in \mathbb{N}_0^d}$ obtained by tensorizing orthonormal univariate polynomials $(\psi_k)_{k=0}^\infty$ such that $\langle\psi_j, \psi_k \rangle_{\lambda}= \delta(j,k) $, where $\delta(j,k)$ is the Kronecker delta. This way, the tensor-product basis functions $(\psi_\bk)=(\psi_{k_1})\otimes\cdots\otimes(\psi_{k_d})$ are orthonormal because
\begin{equation}\label{eq:orthognormalBasisPsi}
 \langle\psi_\bj, \psi_\bk \rangle_{\lambda}
 = \prod_{i=1}^d \langle\psi_{j_i}, \psi_{k_i} \rangle_{\lambda}
 = \prod_{i=1}^d  \delta(j_i,k_i)
 = \delta(\bj,\bk) .
\end{equation}

\subsection{Orthogonal polynomial basis}\label{sec:Psi}
We recall a well-known condition on $\lambda$ that ensures polynomials are dense in $L^2_\lambda(\R)$, see \cite{akhiezer1965classical}.
\begin{proposition}
The polynomials are dense in $L^2_\lambda(\R)$ if there exists a constant $a>0$ such that
\begin{equation}\label{eq:light_tails}
  \int_{\R} \exp(a|x|)\lambda(x) \d x< \infty.
\end{equation}
The orthonormal polynomials $(\psi_k)_{k=0}^{\infty}$ with $\mathrm{degree}(\psi_k) = k$ form a Hilbert basis for $L^2_\lambda(\R)$, and therefore any multivariate function $u\in L^2_\lambda(\mathcal{X})$ can be represented as
 $$
  u(\bx) = \sum_{\bk\in\mathbb{N}^d_0} u_\bk \psi_\bk(\bx) ,
  \quad u_\bk = \int u(\bx)\psi_\bk(\bx)\d\lambda(\bx),
  \quad \psi_\bk : \bx \mapsto \prod_{i = 1}^d \psi_{k_i}(x_i).
 $$
 In addition, the sequence $u_\bk$ is square summable, and for any $\mathcal{K}\subset \mathbb{N}^d_0$ we have
 \begin{equation}\label{eq:truncatedSumInL2}
  \lpnormx{u - \sum_{\bk\in\mathcal{K}} u_\bk \psi_\bk  }{2}^2 = \sum_{\bk\notin \mathcal{K}} u_\bk^2 .
 \end{equation}
\end{proposition}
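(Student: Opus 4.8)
The plan is to reduce the whole proposition to a single one-dimensional statement: that the polynomials are dense in $L^2_\lambda(\R)$. Everything else is then routine Hilbert-space bookkeeping. The tail bound \eqref{eq:light_tails} guarantees that every monomial $x^k$ lies in $L^2_\lambda(\R)$, since $|x|^{2k}\le C_k\exp(a|x|)$ for a suitable constant $C_k$, so Gram--Schmidt orthonormalization of $1,x,x^2,\ldots$ produces a system $(\psi_k)$ with $\mathrm{degree}(\psi_k)=k$ and $\langle\psi_j,\psi_k\rangle_\lambda=\delta(j,k)$; density makes this system complete, hence a Hilbert basis of $L^2_\lambda(\R)$. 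The multivariate claims follow because $L^2_\lambda(\mathcal{X})$ is the Hilbert tensor product $L^2_\lambda(\R)\otimes\cdots\otimes L^2_\lambda(\R)$ (as $\lambda(\bx)$ is of product form), and the tensor products of complete orthonormal systems in each factor form a complete orthonormal system $(\psi_\bk)$ in the product space. The representation $u=\sum_\bk u_\bk\psi_\bk$ with square-summable coefficients is then Parseval's identity, and \eqref{eq:truncatedSumInL2} is Parseval applied to the residual $u-\sum_{\bk\in\mathcal{K}}u_\bk\psi_\bk$, whose coefficients are $(u_\bk)_{\bk\notin\mathcal{K}}$.

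The real work is the density statement, which I would prove via the orthogonality characterization: a subspace is dense if and only if its orthogonal complement is trivial. So suppose $f\in L^2_\lambda(\R)$ satisfies $\langle f,x^n\rangle_\lambda=\int_\R f(x)\,x^n\,\lambda(x)\,\d x=0$ for every $n\in\N_0$, and aim to show $f=0$ $\lambda$-almost everywhere. Since $\lambda$ is integrable, Cauchy--Schwarz gives $\int_\R|f|\lambda\,\d x\le\|f\|_{L^2_\lambda(\R)}(\int_\R\lambda\,\d x)^{1/2}<\infty$, so $f\lambda\in L^1(\R)$ and its Fourier transform is well defined. I would then study its complex continuation
\[
 F(\zeta)=\int_\R e^{\iu\zeta x}\,f(x)\,\lambda(x)\,\d x,\qquad \zeta\in\mathbb{C}.
\]

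The key is to show that $F$ is holomorphic on the strip $|\Im\zeta|<a/2$. For $\zeta=s+\iu t$ one has $|e^{\iu\zeta x}|=e^{-tx}$, and Cauchy--Schwarz together with \eqref{eq:light_tails} yields $\int_\R e^{-tx}|f|\lambda\,\d x\le\|f\|_{L^2_\lambda(\R)}\big(\int_\R e^{2|t||x|}\lambda\,\d x\big)^{1/2}<\infty$ whenever $2|t|<a$, so the integral converges on the strip; the same exponential domination justifies differentiation under the integral sign, showing $F$ is holomorphic there with $F^{(n)}(0)=\iu^{\,n}\int_\R x^n f\lambda\,\d x=\iu^{\,n}\langle f,x^n\rangle_\lambda=0$ for all $n$. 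Since every Taylor coefficient of $F$ at the origin vanishes and the strip is connected, the identity theorem forces $F\equiv0$; in particular the Fourier transform of $f\lambda$ vanishes on $\R$, and by uniqueness of the Fourier transform $f\lambda=0$ a.e., hence $f=0$ $\lambda$-a.e. This closes the density argument.

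I expect the analytic-continuation step to be the main obstacle: specifically, rigorously justifying both the strip of holomorphy and the interchange of differentiation and integration using only the single exponential-moment hypothesis \eqref{eq:light_tails}, and then invoking Fourier uniqueness to pass from vanishing moments to $f=0$. By contrast, the downstream steps---Gram--Schmidt, the tensorization of complete systems over a product weight, and the two applications of Parseval---are standard and require no new ideas.
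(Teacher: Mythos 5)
The paper offers no proof of this proposition at all: it is stated as a recollection of a classical fact, with a pointer to Akhiezer's monograph on the moment problem, so there is no internal argument to compare yours against. What matters is whether your self-contained proof is sound, and it is. The reduction steps (monomials lie in $L^2_\lambda(\R)$ by the tail bound, Gram--Schmidt produces the degree-graded orthonormal system, tensorization of complete orthonormal systems over the product weight identifies $(\psi_\bk)$ as a Hilbert basis of $L^2_\lambda(\mathcal{X})$, and Parseval yields the representation, the square-summability, and the truncation identity \eqref{eq:truncatedSumInL2}) are all handled correctly; note only that \eqref{eq:truncatedSumInL2} for an infinite index set $\mathcal{K}$ implicitly uses that $\sum_{\bk\in\mathcal{K}}u_\bk\psi_\bk$ converges in $L^2_\lambda(\mathcal{X})$, which square-summability provides. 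The core density step is the standard classical argument: for $f$ orthogonal to all monomials, the transform $F(\zeta)=\int_\R e^{\iu\zeta x}f(x)\lambda(x)\,\d x$ is holomorphic on the strip $|\mathrm{Im}\,\zeta|<a/2$ --- your Cauchy--Schwarz domination against \eqref{eq:light_tails} is exactly what justifies both convergence and differentiation under the integral sign --- its derivatives at $0$ are the vanishing moments, the identity theorem forces $F\equiv 0$, and $L^1$ Fourier uniqueness gives $f\lambda=0$ almost everywhere, hence $f=0$ in $L^2_\lambda(\R)$. This is, in essence, the proof that sits behind the paper's citation, so your attempt is both correct and faithful to the intended justification; its added value is precisely that it makes the omitted argument explicit.
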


The assumption in \eqref{eq:light_tails} is essentially a condition on the tails of $\lambda$, which is satisfied by a large class of measures. The decomposition \eqref{eq:truncatedSumInL2} is the cornerstone for analyzing approximations based on orthogonal basis. 
In our setting, the target probability density function is necessarily integrable, and thus projecting the square root of the target density onto orthogonal basis yields square summable coefficients.
Given additional regularity assumptions on the target function---typically in the form of holomorphy or mixed smoothness---there exists a sequence of index set $\mathcal{K}_{n}$ with $|\mathcal{K}_n|=n$ such that the truncation error \eqref{eq:truncatedSumInL2} yields a fast (e.g., algebraic) decay. 
We refer to \cite{dung2018hyperbolic,shen2011spectral,shen2010sparse} for details on general polynomial approximations, \cite{IP:SchStu_2012,schillings2016scaling} on applications to inverse problems, and \cite{zech2022sparsea} for analysis of KR rearrangements. In addition, we use \eqref{eq:truncatedSumInL2} to guide the construction of the index set in Section \ref{sec:AdaptiveLS}.
The following defines a well-known class of orthogonal univariate polynomial basis.

\begin{definition}\label{def:jacobi}
Recall the density function of the Beta distribution,
$$\lambda^{\alpha, \beta}(x) = \frac{\Gamma(\alpha+\beta+2)}{\Gamma(\alpha+1)\Gamma(\beta+1)} (1-x)^\alpha(1+x)^\beta,$$
with $\alpha, \beta > -1$, for $x \in \mathcal{I} = (-1, 1)$. The Jacobi polynomials $( J^{\alpha, \beta}_k)_{k = 0}^\infty$ are the eigenfunctions of the Sturm--Liouville problem,
\[
-\frac1{\lambda^{\alpha, \beta}(x)}\frac{\partial }{\partial x}  \Big( \lambda^{\alpha+1, \beta+1}(x) \frac{\partial }{\partial x} J^{\alpha, \beta}_k(x) \Big) = \sigma^{\alpha, \beta}_k J^{\alpha, \beta}_k(x),  
\]
with eigenvalues  $\sigma^{\alpha, \beta}_k = k(k+\alpha+\beta+1)$. %
The normalized polynomials $\psi^{\alpha,\beta}_k(x) = w^{\alpha,\beta}_k J^{\alpha, \beta}_k(x)$, where $w^{\alpha,\beta}_k = \smash{(\int_{-1}^1 J^{\alpha, \beta}_k(x) \lambda^{\alpha, \beta}(x) \dd x )}^{1/2}$, satisfies $\langle \psi^{\alpha,\beta}_j, \psi^{\alpha,\beta}_k \rangle_{\lambda^{\alpha,\beta}} = \delta(j,k)$. 
There are some notable special cases the Jacobi polynomials. With $\alpha = \beta = 0$, we have the Legendre polynomials. With $\alpha = \beta = -\frac12$ and $\alpha = \beta = \frac12$, we have the Chebyshev polynomials of the first kind and the second kind, respectively. 
\end{definition}

The following proposition shows that the mass matrices introduced in Proposition \ref{prop:marginal} admits a simple expression when orthogonal basis functions are used.

\begin{proposition}\label{prop:orth}
Suppose we have orthogonal basis functions $\{\psi_\bk\}_{\smash{\bk\in\mathbb{N}_0^d}}$ as in \eqref{eq:orthognormalBasisPsi}, a multi-index set $\mathcal{K} \subset \mathbb{N}_0^d$ and its corresponding matrix form $\mathsf{K} = \verb|matrix|(\mathcal{K})$. Let $\mathfrak{q}\subseteq {[d]}$ be a coordinate index set. We let $\mathfrak{u}({\mathfrak{q}}) = (u_1(\mathfrak{q}), \ldots, u_m(\mathfrak{q})) \subseteq [|\mathcal{K}|]$ be a set of row indices of $\mathsf{K}_{: \mathfrak{q}}$ such that the submatrix $\mathsf{K}_{\mathfrak{u}({\mathfrak{q}}) \mathfrak{q}}$ contains all unique rows of $\mathsf{K}_{:\mathfrak{q}}$.
Then the matrix $\mM^{(\mathfrak{q})}$ takes the form
\[
    \mM^{(\mathfrak{q})}_{jk} = \delta(\mathsf{K}_{j \mathfrak{q}} ,\mathsf{K}_{k \mathfrak{q}})
    = \prod_{i\in \mathfrak{q}}  \delta(\mathsf{K}_{ji} ,\mathsf{K}_{ki} ) ,
\]
for all $1\leq j,k \leq |\mathcal{K}|$.
In addition, the rank of $\mM^{(\mathfrak{q})}$ is $\mathrm{rank}(\mM^{(\mathfrak{q})}) =m$, the number of unique rows of $\mathsf{K}_{:\mathfrak{q}}$.
Furthermore, $\mM^{(\mathfrak{q})}$ has a decomposition
\begin{equation}\label{eq:orth_decompose} 
    \mM^{(\mathfrak{q})} = \mathsf{P}^{(\mathfrak{q})} (\mathsf{P}^{(\mathfrak{q})} )^\top, \;\; 
    \mathsf{P}^{(\mathfrak{q})}_{\sigma(\bk) \ell} =  \delta(\sigma(\bk) ,u_\ell(\mathfrak{q})), \;\; \bk \in \mathcal{K}, \;\; \ell = 1, \ldots, |\mathfrak{u}({\mathfrak{q}})| .
\end{equation}
Finally with $\mathfrak{q} = [d]$, the matrix $\mM^{([d])}$ is the $|\mathcal{K}|\times |\mathcal{K}|$ identity matrix. 
\end{proposition}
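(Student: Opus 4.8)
The plan is to unwind the definitions one layer at a time, passing from the univariate mass matrices to the accumulated matrix via the Hadamard product, and then to read off the block structure that simultaneously yields the entry formula, the rank, and the factorization. First I would evaluate a single mass matrix: by definition \eqref{eq:Mi} its entries are $\mM^{(i)}_{\sigma(\bj)\sigma(\bk)} = \langle \psi_{j_i}, \psi_{k_i}\rangle_{\lambda}$, and since the univariate polynomials are orthonormal this equals $\delta(j_i,k_i) = \delta(\mathsf{K}_{\sigma(\bj)i},\mathsf{K}_{\sigma(\bk)i})$. Forming the Hadamard product over $i\in\mathfrak{q}$ then gives
\[
 \mM^{(\mathfrak{q})}_{\sigma(\bj)\sigma(\bk)} = \prod_{i\in\mathfrak{q}} \delta(\mathsf{K}_{\sigma(\bj)i},\mathsf{K}_{\sigma(\bk)i}),
\]
and because a product of Kronecker deltas is $1$ exactly when every factor is $1$, this product collapses to $\delta(\mathsf{K}_{\sigma(\bj)\mathfrak{q}},\mathsf{K}_{\sigma(\bk)\mathfrak{q}})$, which is the first claimed formula.

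For the rank and the factorization together, I would observe that the formula encodes an equivalence relation on the row indices: $j \sim k$ iff $\mathsf{K}_{j\mathfrak{q}} = \mathsf{K}_{k\mathfrak{q}}$, so the distinct values of $\mathsf{K}_{:\mathfrak{q}}$ partition $[|\mathcal{K}|]$ into exactly $m$ classes $C_1,\ldots,C_m$. Writing $\mathbf{1}_{C_\ell}$ for the column vector that is $1$ on $C_\ell$ and $0$ elsewhere, the entry formula says precisely that $\mM^{(\mathfrak{q})} = \sum_{\ell=1}^{m} \mathbf{1}_{C_\ell}\mathbf{1}_{C_\ell}^\top$. Since the supports $C_\ell$ are pairwise disjoint, the vectors $\mathbf{1}_{C_\ell}$ are linearly independent, so this representation has rank $m$, establishing $\mathrm{rank}(\mM^{(\mathfrak{q})}) = m$. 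The matrix $\mathsf{P}^{(\mathfrak{q})}$ of \eqref{eq:orth_decompose}, read as the class-membership indicator $\mathsf{P}^{(\mathfrak{q})}_{\sigma(\bk)\ell} = \delta(\mathsf{K}_{\sigma(\bk)\mathfrak{q}},\mathsf{K}_{u_\ell(\mathfrak{q})\mathfrak{q}})$ (that is, row $\sigma(\bk)$ agrees with the chosen representative $u_\ell(\mathfrak{q})$ on the columns indexed by $\mathfrak{q}$), has exactly these indicator vectors as its columns, so $\mathsf{P}^{(\mathfrak{q})}(\mathsf{P}^{(\mathfrak{q})})^\top = \sum_\ell \mathbf{1}_{C_\ell}\mathbf{1}_{C_\ell}^\top = \mM^{(\mathfrak{q})}$, and the disjoint-support argument again certifies that this is a genuine rank-$m$ factorization.

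Finally, for $\mathfrak{q}=[d]$ the condition $\mathsf{K}_{j[d]} = \mathsf{K}_{k[d]}$ means the full multi-indices coincide; as the elements of $\mathcal{K}$ are distinct, every row of $\mathsf{K}$ is unique, whence $m=|\mathcal{K}|$ and $\mM^{([d])}_{\sigma(\bj)\sigma(\bk)} = \delta(\bj,\bk)$ is the identity (equivalently, this is just the full orthonormality \eqref{eq:orthognormalBasisPsi}). I do not anticipate a genuine obstacle, as the whole argument is bookkeeping around an equivalence relation; the one point requiring care is the correct reading of the terse symbol $\delta(\sigma(\bk),u_\ell(\mathfrak{q}))$ as the class-membership indicator rather than a literal comparison of scalar indices, since it is exactly the disjoint-support structure of $\mathsf{P}^{(\mathfrak{q})}$ that gives both the factorization and the rank at once.
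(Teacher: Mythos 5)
Your proof is correct, and it reaches the result by a slightly different route than the paper. The paper argues at the level of functions: it restricts the tensor-product basis to the coordinates in $\mathfrak{q}$, observes that the distinct restricted functions $\{\phi_j : j\in\mathfrak{u}(\mathfrak{q})\}$ form an orthonormal family in $L^2_\lambda(\mathcal{X}_\mathfrak{q})$, writes each restricted function as $\phi_{\sigma(\bk)} = \sum_\ell \mathsf{P}^{(\mathfrak{q})}_{\sigma(\bk)\ell}\,\phi_{u_\ell(\mathfrak{q})}$, and then computes $\mM^{(\mathfrak{q})}$ as the Gram matrix of the restricted functions, which collapses to $\mathsf{P}^{(\mathfrak{q})}(\mathsf{P}^{(\mathfrak{q})})^\top$ by orthonormality. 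You instead work entirely at the matrix level: univariate orthonormality gives each $\mM^{(i)}$ as a matrix of Kronecker deltas, the Hadamard product turns these into the row-agreement indicator $\delta(\mathsf{K}_{j\mathfrak{q}},\mathsf{K}_{k\mathfrak{q}})$, and the equivalence-class decomposition $\mM^{(\mathfrak{q})} = \sum_{\ell}\mathbf{1}_{C_\ell}\mathbf{1}_{C_\ell}^\top$ with disjoint supports delivers the rank and the factorization in one stroke. The two arguments rest on the same underlying facts (the columns of $\mathsf{P}^{(\mathfrak{q})}$ are exactly your indicator vectors), but your version is more self-contained: it explicitly proves the rank claim and the $\mathfrak{q}=[d]$ identity claim, both of which the paper's proof leaves implicit, and it correctly flags the one genuine subtlety --- that $\delta(\sigma(\bk),u_\ell(\mathfrak{q}))$ in \eqref{eq:orth_decompose} must be read as class membership (agreement of rows of $\mathsf{K}_{:\mathfrak{q}}$), since a literal comparison of the scalar indices would make $\mathsf{P}^{(\mathfrak{q})}(\mathsf{P}^{(\mathfrak{q})})^\top$ a diagonal matrix and contradict the entry formula. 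The paper makes that same identification only implicitly, through the functional expansion of $\phi_{\sigma(\bk)}$ in the unique restricted basis.
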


\begin{proof}
Given the coordinate index set $\mathfrak{q}\subseteq {[d]}$, we restrict the set of basis functions $\{\psi_\bk\}_{\bk\in\mathcal{K}}$ to $\bx_\mathfrak{q}$, which leads to a multiset 
\[
    \{\psi_{\bk_\mathfrak{q}}: \bk\in\mathcal{K}\}, \quad \psi_{\bk_\mathfrak{q}}: \bx_\mathfrak{q} \mapsto \prod_{i \in \mathfrak{q}} \psi_{k_i}(x_i),
\]
with cardinality $|\mathcal{K}|$. Elements of $\{\psi_{\bk_\mathfrak{q}} : \bk {\in} \mathcal{K}\}$ may not be unique. With the mapping $\sigma: \mathcal{K} \rightarrow [|\mathcal{K}|]$, we can also express $\{\psi_{\bk_\mathfrak{q}}: \bk{\in}\mathcal{K}\}$ using scalar indices, which yields 
\[ 
    \{\phi_{\sigma(\bk)}: \bk\in\mathcal{K}\}, \quad \phi_{\sigma(\bk)}(\bx_\mathfrak{q}):=\psi_{\bk_\mathfrak{q}}(\bx_\mathfrak{q}).
\]
Given the matrix $\mathsf{K} = \verb|matrix|(\mathcal{K})$, recall that each element of $\{\bk_\mathfrak{q}: \bk\in\mathcal{K}\}$ corresponds to $\sigma(\bk)$-th row of the submatrix $\mathsf{K}_{:\mathfrak{q}}$ and the index set $\mathfrak{u}({\mathfrak{q}})\subseteq [|\mathcal{K}|]$ is constructed such that the submatrix $\mathsf{K}_{\mathfrak{u}({\mathfrak{q}}) \mathfrak{q}}$ contains all unique rows of $\mathsf{K}_{:\mathfrak{q}}$. This way, the set of basis functions 
\(
    \{\phi_{j}: j\in\mathfrak{u}({\mathfrak{q}})\}
\)
contains unique elements of the multiset $\{\phi_{\sigma(\bk)}: \bk\in\mathcal{K}\}$. Each element of the multiset $\{\phi_{\sigma(\bk)}: \bk\in\mathcal{K}\}$ can be expressed as
\[
    \phi_{\sigma(\bk)}(\bx_{\mathfrak{q}}) = \sum_{\ell = 1}^{|\mathfrak{u}({\mathfrak{q}})|} \mathsf{P}^{(\mathfrak{q})}_{\sigma(\bk) \ell }\, \phi_{u_\ell(\mathfrak{q})}(\bx_{\mathfrak{q}}).
\]
where $\mathsf{P}^{(\mathfrak{q})}$ is defined in \eqref{eq:orth_decompose}. By Proposition \ref{prop:marginal} and the above identities, we have
\begin{align*}
    \mM^{(\mathfrak{q})}_{\sigma(\bj) \sigma(\bk)}  & = \int_{\mathcal{X}_{\mathfrak{q}}}  \phi_{\sigma(\bj)}(\bx_{\mathfrak{q}}) \phi_{\sigma(\bk)}(\bx_{\mathfrak{q}}) \lambda(\bx_{\mathfrak{q}}) \d \bx_{\mathfrak{q}} \\
    & = \sum_{\ell = 1}^{|\mathfrak{u}({\mathfrak{q}})|} \sum_{\ell' = 1}^{|\mathfrak{u}({\mathfrak{q}})|} \mathsf{P}^{(\mathfrak{q})}_{\sigma(\bj) \ell } \mathsf{P}^{(\mathfrak{q})}_{\sigma(\bk) \ell' }\, \int_{\mathcal{X}_{\mathfrak{q}}}  \phi_{u_\ell(\mathfrak{q})}(\bx_{\mathfrak{q}})  \phi_{u_{\ell'}(\mathfrak{q})}(\bx_{\mathfrak{q}}) \lambda(\bx_{\mathfrak{q}}) \d \bx_{\mathfrak{q}} \\
    & = \sum_{\ell = 1}^{|\mathfrak{u}({\mathfrak{q}})|} \sum_{\ell' = 1}^{|\mathfrak{u}({\mathfrak{q}})|} \mathsf{P}^{(\mathfrak{q})}_{\sigma(\bj) \ell } \mathsf{P}^{(\mathfrak{q})}_{\sigma(\bk) \ell' }\,\delta(\ell, \ell') = \sum_{\ell = 1}^{|\mathfrak{u}({\mathfrak{q}})|} \mathsf{P}^{(\mathfrak{q})}_{\sigma(\bj) \ell } \mathsf{P}^{(\mathfrak{q})}_{\sigma(\bk) \ell },
\end{align*}
where the second last equality follows from elements of the set $\{\phi_{j}: j\in\mathfrak{u}({\mathfrak{q}})\}$ are normalized and mutually orthogonal with respect to $\lambda$. This concludes the proof.
\end{proof}

Applying \eqref{eq:accum_M} in Proposition \ref{prop:marginal} and the decomposition \eqref{eq:orth_decompose} of Proposition \ref{prop:orth}, the marginal function of $g(\bx)^2$, where $g(\bx) = \sum_{\bk \in \mathcal{K}} c_{\bk} \psi_{\bk}(\bx)$, after integrating over the coordinates indexed by $\mathfrak{q} \subseteq [d]$ is 
\begin{align}\label{eq:orth_marginal}
    p(\bx_{{\mathfrak{q}}^c}) = \int_{\mathcal{X}_{\mathfrak{q}}} g(\bx)^2 \lambda_{\mathfrak{q}}(\bx_{\mathfrak{q}}) \dd \bx_{\mathfrak{q}} = \sum_{\ell = 1}^{ |\mathfrak{u}^{\mathfrak{q}}|} \Big( \sum_{\bk \in \mathcal{K}}  c_{\bk}  \mathsf{P}^{({\mathfrak{q}})}_{\sigma(\bk)\ell}  \prod_{i \in {\mathfrak{q}}^c}  \psi_{k_i}(x_i) \Big)^2.
\end{align}

\begin{algorithm}[h]
\caption{Evaluation of the KR rearrangement. Inputs are the weight function $\lambda$, the system of orthogonal basis functions $(\psi_k)_{k=0}^\infty$, the multi-index set $\mathcal{K}$, the approximation coefficients $(c_\bk)_{\bk\in\mathcal{K}}$, a constant $\gamma$, a variable ordering $\mathfrak{p}=(p_1,\ldots,p_d)$, and a realization $\bu$ of the reference random variable $\bU$. \label{alg:rt}}
\begin{algorithmic}[1]
\Function{EvaluateKR}{$\lambda, (\psi_k)_{k=0}^\infty$, $\mathcal{K}$, $(c_\bk)_{\bk\in\mathcal{K}}$, $\gamma$, $\mathfrak{p}$, $\bu$}
\State $\hat z \leftarrow \gamma + \sum_{\bk \in\mathcal{K}} c_\bk^2$  \Comment{{\scriptsize $\hat z$ is the normalizing constant}}
\State $\bxi \leftarrow \mathcal{R}(\bu)$  \Comment{{\scriptsize $\bxi$ is a realization of the uniform random variable in $[0,1]^d$}}
\For{$t = 1,2,\ldots,d$}
\State  $\mathfrak{q}_{t} \leftarrow  \mathfrak{p}_{[d]\setminus[t]}$ 
\State Find the index set $\mathfrak{u}^{\mathfrak{q}_{t}}$ and compute $\mathsf{P}^{(\mathfrak{q}_t)}$ using \eqref{eq:orth_decompose}
\State Assemble $q(x_t)$ as in \eqref{eq:one_pdf}
\If{$t = 1$} \Comment{{\scriptsize marginal distribution defined by $(\mathsf{P}^{(\mathfrak{q}_1)}, (c_\bk)_{\bk\in\mathcal{K}}, \gamma, \hat z)$}}
\State $x_{p_1}\leftarrow \mathcal{F}^{-1}_{\smash{\hat X}_{p_1} }(\xi_{p_1})$ \vspace{-2pt}
\Else \Comment{{\scriptsize conditional distribution defined by $(\mathsf{P}^{(\mathfrak{q}_{t-1})}, \mathsf{P}^{(\mathfrak{q}_{t})}, (c_\bk)_{\bk\in\mathcal{K}}, \gamma)$}}
\State $x_{p_t}\leftarrow \mathcal{F}^{-1}_{\smash{\hat X}_{p_t} | \smash{\hat \bX}_{\mathfrak{p}_{[t-1]}} }(\xi_{p_t}|\bx_{\mathfrak{p}_{[t-1]}})$ \Comment{{\scriptsize $\bx_{\mathfrak{p}_{[t{-}1]}} {=} (x_{p_1}, ..., x_{p_{t-1}})$}}\vspace{-4pt}
\EndIf
\EndFor
\State \Return $\bx = (x_1, x_2, \ldots, x_d)$ such that $\bx = \mathcal{F}_{\smash{\hat \bX}}^{-1}\circ \mathcal{F}_{\bU}(\bu)$ 
\EndFunction
\end{algorithmic}
\end{algorithm}

Algorithm~\ref{alg:rt} summarizes the procedure for evaluating the KR rearrangement. Note that Steps 2 and 6 of Algorithm~\ref{alg:rt} can be precomputed in an offline phase to reduce the complexity.
Evaluating the inverse distribution functions at Steps 9 and 11 in Algorithm \ref{alg:rt} requires a few iterations of a root finding methods---such as the regula falsi and Newton's methods---to reach an accuracy close to machine precision.
This requires several evaluations of the conditional density $x_t\mapsto f_{\smash{\hat X_{t} | \hat\bX_{[t{-}1]}}}(x_t| \bx_{[t{-}1]})$ for fixed $\bx_{[t{-}1]}$. To realize these operations efficiently, some precomputations can be carried before inverting the conditional distribution functions.
By definition \eqref{eq:approx_density} of $f_{\hat\bX}$, with fixed variables $\bx_{[t{-}1]}$, one can write the conditional density as
\begin{equation}\label{eq:one_pdf}
    f_{\hat X_{t} | \hat\bX_{[t{-}1]}}(x_t| \bx_{[t{-}1]})
     = \frac{1}{\zeta} q(x_{t}) \lambda(x_{t}),
\end{equation}
where
\begin{align*}
 q(x_{t}) = \gamma + \sum_{\ell = 1}^{r} \Big( \sum_{\bk \in \mathcal{K}} \psi_{k_{t}}(x_{t})\mC_{\bk \ell}\Big)^2, \;\; 
 \mC_{\bk \ell} =  c_{\bk}  \mathsf{P}^{([d]\setminus[t])}_{\sigma(\bk)\ell} \prod_{i = 1}^{t-1}  \psi_{k_i}(x_i), \;\;
 \zeta = \gamma + p(\bx_{[t{-}1]}).
\end{align*}
In addition, because $\psi_k$ are polynomials, the function $q(x_t)$ in \eqref{eq:one_pdf} is a polynomial with degree bounded by $2n_t$, where $n_t$ is the maximum degree of the univariate polynomials in coordinate $x_t$, i.e., $n_t= \max\{k_t : \bk\in\mathcal{K}\}$. As a consequence, we have
\begin{equation*}
    f_{\hat X_{t} | \hat\bX_{[t{-}1]}}(x_t| \bx_{[t{-}1]})
     = \bigg( \sum_{j = 0}^{2n_t} a_j \psi_{j}(x_t) \bigg) \lambda(x_t),
\end{equation*}
where the coefficients $a_j=a_j(\bx_{[t{-}1]})$ can be precomputed by applying any exact collocation method in one-dimension with $2n_t$ number of points.
See Appendix \ref{sec:legendre} for an example. Collocation methods can have a less complexity than directly evaluating \eqref{eq:one_pdf} by avoiding the summation of squares over the set $\mathcal{K}$ during root findings. Integrating the conditional density yields the distribution function
\begin{equation}\label{eq:oned_dist}
    \mathcal{F}_{\hat X_{t} | \hat \bX_{[t-1]} }(x_{t}|\bx_{[t-1]})
    = \sum_{j = 0}^{2n_t} a_j  \Big( \int_{-\infty}^{x_t} \psi_{j}(y)  \lambda(y) \dd y \Big).
\end{equation}
In Appendix \ref{sec:polys}, we demonstrate that the integrals in \eqref{eq:oned_dist} can be obtained analytically with a computational complexity linear in $2n_t$ for many commonly used orthogonal polynomials, including Chebyshev polynomials on $(-1,1)$, Legendre polynomials on $(-1,1)$, Laguerre polynomials on $[0,\infty)$, Hermite polynomials on $(-\infty,\infty)$.

\begin{remark}
We denote the maximum cardinality of the univariate basis functions over all coordinates by $n$. The matrix $\mathsf{P}^\lowsup{([d]{\setminus}[t])}$ has $|\mathcal{K}|$ number of non-zero entries as shown in \eqref{eq:orth_decompose}. Thus, in each iteration of Algorithm~\ref{alg:rt}, it costs $O( n |\mathcal{K}| )$ floating point operations (flops) to evaluate the conditional density on collocation points. It costs an additional $O(n\log(n))$ flops to obtain the coefficients  $(a_j)_{j = 1}^{m}$ in \eqref{eq:oned_dist} using fast Fourier transform or similar fast transforms. The overall computational complexity of evaluating the KR rearrangement in Alg.~\ref{alg:rt} is $\mathcal{O}(d n |\mathcal{K}| + dn\log(n))$.
\end{remark}

\begin{remark}
    Although this subsection focuses on orthogonal polynomials, Proposition \ref{prop:orth} and Alg.~\ref{alg:rt} are generally applicable to any orthogonal basis functions. 
\end{remark}

\begin{algorithm}[h]
\caption{Construction of the KR rearrangement using orthogonal basis functions. Inputs are the potential function $\Phi$ of the target density, the weight function $\lambda$, a system of orthogonal basis functions $(\psi_k)_{k=0}^\infty$, and a relative error threshold $\tau$. \label{alg:rt_approx}}
\begin{algorithmic}[1]
\Procedure{ConstructKR}{$\Phi, \lambda, (\psi_k)_{k=0}^\infty, \tau$}
\State Find the multi-index set $\mathcal{K}$ and coefficients $(c_\bk)_{\bk\in\mathcal{K}}$ such that
\[\lpnormx{\sum_{\bk \in \mathcal{K}} c_{\bk} \psi_{\bk}(\bx) - \exp(-\textstyle\frac12\Phi)}{2} \leq \tau \lpnormx{\exp(-\textstyle\frac12\Phi)}{2}. \]
\State Choose a constant $\gamma \leq \tau^2 \|\exp(-\frac12\Phi)\|^2_{L^2\lambda(\mathcal{X})}$
\State \Return $\mathcal{K}$, $(c_\bk)_{\bk\in\mathcal{K}}$ and $\gamma$ that defines the map $\mathcal{F}_{\smash{\hat\bX}}$.
\EndProcedure
\end{algorithmic}
\end{algorithm}

\subsection{Adaptive least square approximation}\label{sec:AdaptiveLS}

We provide the details of constructing the index set $\mathcal{K}$ and the computation of $g\in\mathcal{V}_{\mathcal{K}}$ in \eqref{eq:approx} based on weighted least square methods. Starting from some initial index set $\mathcal{K}_1\subset \N_0^d$, we employ a greedy algorithm  to adaptively expand the current index set $\mathcal{K}_n$ as
$
 \mathcal{K}_{n+1} = \mathcal{K}_n \cup \{ \bk^1_n,\hdots,\bk^{m_n}_n \},
$
where $\bk^1_n,\hdots,\bk^{m_n}_n \in\mathbb{N}^d_0$ are $m_n$ multi-indices to be defined below.
Borrowing ideas from \cite{migliorati2015adaptive,migliorati2019adaptive}, we constrain all sets $\mathcal{K}_n$ in the greedy algorithm to be \emph{downward closed} \cite{chkifa2015breaking,cohen2018multivariate}, in the sense that
\begin{equation}\label{eq:downwardClosedK}
\forall \bk \in \mathcal{K}_n \quad \text{and} \quad \forall \bk' \in \N_0^d, \quad \bk'\leq \bk \quad\implies\quad
    \bk'\in \mathcal{K}_n ,
\end{equation}
where $\bk'\leq \bk$ is a partial order such that $k_i'\leq k_i$ for all $i=1,\hdots,d$. The downward closed property is a key to design a tractable construction of $\mathcal{K}_n$, because it does not allow the index set $\mathcal{K}_n$ to contain any ``holes'', see \cite{cohen2018multivariate}.
The new index set $\mathcal{K}_{n+1}$ remains downward closed if and only if all $\bk^1_n,\hdots,\bk^{m_n}_n$ belong to the \emph{reduced margin} of $\mathcal{K}_n$ defined by
\begin{equation}
 \mathcal{K}_n^\text{RM} = \left\{ \bk\notin \mathcal{K}_n: \bk-\mathbf{e}_i \in \mathcal{K}_n \text{ for all } 1\leq i\leq d \text{ such that } k_i > 0 \right\} ,
\end{equation}
where $\mathbf{e}_i$ denotes the $i$-th canonical vector of $\mathbb{N}^d_0$.
We employ the bulk chasing strategy \cite{migliorati2019adaptive} in order to select the multi-indices from $\mathcal{K}_n^\text{RM}$.
For a given parameter $\theta\in(0,1]$, we select the smallest number $m_n$ of multi-indices $\bk^1_n,\hdots,\bk^{m_n}_n \in \mathcal{K}_n^\text{RM} $ such that
$$
 \sum_{\bk \in \{ \bk^1_n,\hdots,\bk^{m_n}_n \} } e_n(\bk)
 \geq \theta \sum_{\bk \in \mathcal{K}_n^\text{RM} } e_n(\bk) ,
$$
where
\begin{align*}
 e_n(\bk) &= \Big(\int \Big( \exp(-{\textstyle\frac12}\Phi(\bx))-g_n(\bx) \Big)\psi_\bk(\bx) \lambda(\bx) \d \bx \Big)^2 ,\\
 g_n(\bx) &= \min_{h\in\mathcal{V}_{\mathcal{K}_n}} \int \Big( \exp(-{\textstyle\frac12}\Phi(\bx))-h(\bx) \Big)^2 \lambda(\bx)\d \bx .
\end{align*}
When $e_n$ and $g_n$ are known, the above procedure actually selects the candidate index set from the reduced margin that yields the fastest $L^2$ error decay, cf. \eqref{eq:truncatedSumInL2}.
In practice, $e_n$ and $g_n$ are approximated using discrete $L^2$ projection and Monte Carlo samples. We employ the optimal weighted least square method \cite{cohen2017optimal,hampton2015coherence,migliorati2019adaptive,narayan2017christoffel} that introduces an importance sampling scheme to improve the stability of the discrete $L^2$ projection. The importance sampling scheme uses the optimal density
$$
 \Lambda_n(\bx) = \frac{1}{|\mathcal{K}_n|} \sum_{\bk\in\mathcal{K}_n}(\psi_\bk(\bx))^2\lambda(\bx) ,
$$
and likelihood weight $w(\bx) = \lambda(\bx)/\Lambda_n(\bx)$, which yields the following estimators
\begin{align*}
\hat e_n(\bk) &= \Big( \frac{1}{N}\sum_{i=1}^N \big( \exp(-{\textstyle\frac12}\Phi(\bX^{(i)}))-\hat g_n(\bX^{(i)}) \big)\psi_\bk(\bX^{(i)})  w(\bX^{(i)}) \Big)^2 ,\\
\hat g_n(\bx) &= \min_{h\in\mathcal{V}_{\mathcal{K}_n}} \frac{1}{N}\sum_{i=1}^N \big( \exp(-{\textstyle\frac12}\Phi(\bX^{(i)}))-h(\bX^{(i)}) \big)^2 w(\bX^{(i)}),
\end{align*}
where $\bX^{(1)},\hdots,\bX^{(N)}$ are independent copies of $\bX\sim \Lambda_n$. Compare to the classical Monte Carlo method, the optimal weighted least square method ensures the quasi-optimality of the empirical projection $\hat g_n$.

\subsection{Domain transformation for densities supported on \texorpdfstring{$\R^d$}{}}\label{sec:rd}

The Hermite polynomials associated to the Gaussian weight function $\lambda(\bx) \propto \exp(-\frac12 \|\bx\|_2^2)$ provide a natural candidate for approximating densities on the infinite domain $\R^d$; see Appendix \ref{sec:hermite} for details. However, there are some caveats in using Hermite polynomials---or Laguerre polynomials when the support is $(0,\infty)^d$---for approximating random variables. First, inverting the distribution function on a unbounded domain can be numerically challenging towards the tails of the distribution. More importantly, the tail condition in \eqref{eq:tail} may not be satisfied by the weight functions on unbounded domain. In fact, the condition \eqref{eq:light_tails} in Proposition \ref{prop:orth} asserts that the tails of the weight function $\lambda$ need to decay no slower than the exponential function. This does not fullfil the tail condition \eqref{eq:tail} for many heavy-tail random variables. Here we discuss different options we have in order to circumvent this issue.

A natural option is to apply domain truncation based on the \emph{a priori} decay rate of the target density towards the tail. Doing this, however, introduce an additional approximation step which might degrade the accuracy of the method. 

The domain transformation method \cite{boyd2001chebyshev,shen2014approximations} is an alterative to handle densities on $\R^d$ while satisfying the tail condition  \eqref{eq:tail}. For brevity, we define domain mappings for univariate densities defined on $\R$. By taking tensor product of the one-dimensional weight function, the method can be generalized to multi-dimensional settings. As a starting point, one maps the real line $\R$ to the interval $(-1,1)$ using a diffeomorphism $z:\R\rightarrow(-1,1)$. Given a reference weight function $\lambda^\text{ref}$ on $(-1,1)$ and associated orthonormal basis functions
 $(\psi_i)_{i = 0}^{\infty}$, the extended basis functions 
\begin{equation}\label{eq:extended_basis}
    (\phi_i)_{i = 0}^{\infty}, \quad \phi_i(x) = \psi_i(z(x)) ,
\end{equation}
forms a set of orthonormal basis functions for $L_{\mu}^2(\R)$ with respect to the weight 
\begin{equation}\label{eq:extended_weight}
   \lambda(x) = \lambda^\text{ref}(z(x)) z'(x).
\end{equation}
This can be shown by a change of variable $x=x(z)$,
\begin{align*}
    \delta(j,k) = \int_{-1}^{1} \psi_j(z) \psi_k(z) \lambda^\text{ref}(z)  \dd z =
    \int_{-\infty}^\infty \phi_j(x)\phi_k(x) \lambda(x) \dd x .
\end{align*}
Table \ref{table:mappings} provides some commonly used domain mappings and the induced weight function $\lambda(x)$ based on $\lambda^{\rm ref}(z) = \frac{1}{\pi}(1-z^2)^{-1/2}$, which is the weight function of the Chebyshev polynomials of the first kind.

\begin{table}[h]
\footnotesize
\caption{Examples of domain mappings, inverse mappings and the induced weight function $\lambda(x)$ for the weight function $\lambda^\text{ref}(z)=\frac{1}{\pi}(1-z^2)^{-1/2}$ on $(-1,1)$.}\vspace{-6pt}\label{table:mappings}
\centering
\begin{tabular}{l|ll|ll|l}
    \hline
    & $z(x)$ & $ z'(x)$ &  $x(z)$ & $x'(z)$ & $\lambda(x)$ $\vphantom{\big)}$\\ \hline
    logarithmic  & $\tanh(x)$ & $1 - \tanh(x)^2$ & $\displaystyle\frac12 \log\Big(\frac{1+z}{1-z}\Big)$ & $\displaystyle\frac1{1 - z^2}$ & $\frac{\displaystyle \mathrm{sech}(x)}{\displaystyle\pi} \vphantom{\frac{\big)}{\big)}}$ \\\hline
    algebraic  & $\displaystyle\frac{x}{\sqrt{1+x^2}}$ & $\displaystyle\frac1{(1 + x^2)^{3/2}}$ & $\displaystyle\frac{z}{\sqrt{1-z^2}}$ & $\displaystyle\frac1{(1 - z^2)^{3/2}}$ &  $\displaystyle\frac{1}{\pi(1+x^2)}\vphantom{\frac{\big)}{\big)}}$ \\\hline
\end{tabular}
\end{table}

We observed that the tail condition \eqref{eq:tail} can be resolved by the domain transformation. For example, using the algebraic mapping in Table \ref{table:mappings}, the ratio between the target density and the weight function satisfies 
\[
\log \Big(\frac{f_X(x)}{\lambda(x)}\Big) = \mathcal{O} \big( \log(1+x^2) + \log f_X(x) \big) = \mathcal{O}(\log|x| + \log f_X(x)).
\]
Thus, for any sufficiently smooth target density with a log-density function decays faster than the logarithmic rate, we have the tail condition $\sup_{x\in\R} f_X(x) / \lambda(x) < \infty$. In fact, the Cauchy distribution, in which the mean and the variance are not defined, presents a boundary case in this situation.

Using the mapped basis functions $(\phi_j)_{j=0}^{\infty}$ on $\R$ and the induced weight $\lambda(x)$, one may directly apply the square-root approximation procedure to the target density $f_X(x)$. More interestingly, as shown in Appendix \ref{sec:domain_mapping}, we can equivalently pull back the target density $f_X(x)$ using the inverse mapping $x(z)$, and then apply the square-root approximation procedure to the pullback density $f_Z(z) = f_X(x(z))x'(z)$ using the reference weight $\lambda^{\rm ref}(z)$ and basis functions $(\psi_j)_{j=0}^\infty$ defined on $(-1,1)$. This allows us to recycle the well-defined implementation procedures for bounded domains, while the convergence analysis based on the mapped basis functions $(\phi_j)_{j=0}^{\infty}$ on $\R$, see \cite{shen2014approximations} and references therein, still applies. 

\section{Self-reinforced approximations}\label{sec:dirt}

For target densities concentrated to a small subdomain or have complicated correlation structures, it may require a high-order index set to construct in one step an approximation to the target density $f_\bX$. 
In previous works \cite{chen2017hessian,schillings2016scaling}, it has shown that appropriate linear preconditioning/scaling can significantly reduce the complexity of sparse polynomial approximations. Here we present the general nonlinear preconditioning framework for approximating concentrated probability densities using orthogonal basis functions. Our approach is \emph{self-reinforced} as it uses the KR rearrangements defined by existing function approximations to precondition the next function approximation that targets a new density with increasing complexity. We also provide some concrete implementation strategies. 

\subsection{Layered construction}
Given a target random variable $\bX$ with density $f_\bX$, we ultimately build a composition of transport maps 
$$
 \mathcal{T}_L = \mathcal{Q}_1 \circ \mathcal{Q}_2 \circ \hdots\circ \mathcal{Q}_L,
$$
guided by a sequence of bridging densities
\[
f_{\bX_{1}} ,\, f_{\bX_{2}} ,\,\hdots,\, f_{\bX_{L}} := f_\bX.
\]
At the end of the construction, the pushforward of a reference random variable $\bU$ under the composite map $\mathcal{T}_L$  approximates $\bX$.

The layers $(\mathcal{Q}_\ell)_{\ell = 1}^L$ in the composite map are built recursively. At level $\ell$, the composed map $\mathcal{T}_{\ell}=\mathcal{Q}_1 \circ \hdots\circ \mathcal{Q}_\ell$ approximates the bridging density $f_{\bX_\ell}$ by $f_{\hat\bX_\ell} := (\mathcal{T}_{\ell})_\sharp f_{\bU}$. In the new iteration, the current map $\mathcal{T}_{\ell}$ preconditions the next bridging density as the pullback 
\(
    \mathcal{T}_{\ell} ^\lowsup{\sharp} f_{\bX_{\ell{+}1}}.
\)
We construct a new polynomial approximation to (the unnormalized version of) the function $\sqrt{\smash{\mathcal{T}^\lowsup{\sharp}_{\ell}} f^{}_{\smash{\bX_{\ell{+}1}}}}$, and then derive the corresponding KR rearrangement $\mathcal{Q}_{\ell+1}$ such that $(\mathcal{Q}_{\ell+1})_\sharp f_\bU$ approximates $\smash{\mathcal{T}_{\ell}^\lowsup{\sharp} f_{\bX_{\ell{+}1}}}$ as detailed in Section \ref{sec:sirt}. This updates the composite map as $\mathcal{T}_{\ell+1} = \mathcal{T}_{\ell}\circ \mathcal{Q}_{\ell+1}$, which approximates $f_{\bX_{\ell{+}1}}$ by $f_{\hat\bX_{\ell+1}} := (\mathcal{T}_{\ell+1})_\sharp f_{\bU}$. Under mild assumptions, the following proposition shows the convergence of the self-reinforced construction process.

\begin{proposition}\label{prop:recur}
Assume there exist a constant $\eta \in (0, 1)$ such that the bridging densities satisfy $\sup_{1\leq \ell < L} D_\mathrm{H}\big( f_{\smash{\bX_{\ell}}}, f_{\bX_{\ell+1}} \big) \leq \eta$. Assume each map $\mathcal{Q}_\ell$ has an error reduction property, 
\begin{equation}\label{eq:err_redu}
    D_\mathrm{H}\big( (\mathcal{Q}_{\ell+1} )_\sharp f_\bU, \mathcal{T}_{\ell}^\sharp f_{\bX_{\ell+1}} \big) \leq  \omega D_\mathrm{H}\big( f_\bU, \mathcal{T}_{\ell}^\sharp f_{\bX_{\ell+1}} \big)    
\end{equation}
for some constant $\omega \in [0,1)$. Let the initial map  $\mathcal{Q}_1$ satisfy $D_\mathrm{H}( (\mathcal{Q}_{1} )_\sharp f_\bU, f_{\bX_{1}} ) \leq \omega \eta$. Then, the resulting composite map $\mathcal{T}_L$ satisfies 
\[
    D_\mathrm{H}\big((\mathcal{T}_{L} )_\sharp f_\bU, f_{\bX_{L}} \big) \leq \frac{\omega}{1-\omega} \eta.
\]
\end{proposition}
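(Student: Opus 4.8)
The plan is to track a single scalar error sequence and reduce the statement to a linear recursion. Define $\epsilon_\ell := D_\mathrm{H}\big((\mathcal{T}_\ell)_\sharp f_\bU, f_{\bX_\ell}\big)$, so that the claim is precisely $\epsilon_L \leq \frac{\omega}{1-\omega}\eta$. The single tool that makes everything collapse is the invariance of the Hellinger distance under pushforward (equivalently pullback) by a diffeomorphism: for any diffeomorphism $\mathcal{S}$ one has $D_\mathrm{H}(\mathcal{S}_\sharp \mu, \mathcal{S}_\sharp \nu) = D_\mathrm{H}(\mu, \nu)$. I would verify this first by a one-line change of variables using the definition \eqref{eq:Hellinger} and the change-of-variable formula \eqref{eq:pullbackDensity}, noting that the Jacobian factor appears symmetrically inside the squared difference of square roots and cancels against the volume element.

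Next I would derive the recursion. Using $\mathcal{T}_{\ell+1} = \mathcal{T}_\ell \circ \mathcal{Q}_{\ell+1}$, and hence $(\mathcal{T}_{\ell+1})_\sharp f_\bU = (\mathcal{T}_\ell)_\sharp (\mathcal{Q}_{\ell+1})_\sharp f_\bU$, I pull back by $\mathcal{T}_\ell$ and invoke the invariance to get $\epsilon_{\ell+1} = D_\mathrm{H}\big((\mathcal{Q}_{\ell+1})_\sharp f_\bU,\, \mathcal{T}_\ell^\sharp f_{\bX_{\ell+1}}\big)$. The error-reduction hypothesis \eqref{eq:err_redu} then bounds this by $\omega\, D_\mathrm{H}\big(f_\bU,\, \mathcal{T}_\ell^\sharp f_{\bX_{\ell+1}}\big)$. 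A triangle inequality splits the remaining distance through the intermediate density $\mathcal{T}_\ell^\sharp f_{\bX_\ell}$: the term $D_\mathrm{H}\big(\mathcal{T}_\ell^\sharp f_{\bX_\ell},\, \mathcal{T}_\ell^\sharp f_{\bX_{\ell+1}}\big)$ equals $D_\mathrm{H}\big(f_{\bX_\ell}, f_{\bX_{\ell+1}}\big) \leq \eta$ by invariance and the bridging hypothesis, while $D_\mathrm{H}\big(f_\bU,\, \mathcal{T}_\ell^\sharp f_{\bX_\ell}\big) = D_\mathrm{H}\big((\mathcal{T}_\ell)_\sharp f_\bU, f_{\bX_\ell}\big) = \epsilon_\ell$, again by invariance together with $(\mathcal{T}_\ell)_\sharp \mathcal{T}_\ell^\sharp = \mathrm{id}$. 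This yields the clean recursion $\epsilon_{\ell+1} \leq \omega(\epsilon_\ell + \eta)$, with base case $\epsilon_1 = D_\mathrm{H}\big((\mathcal{Q}_1)_\sharp f_\bU, f_{\bX_1}\big) \leq \omega\eta$ (using $\mathcal{T}_1 = \mathcal{Q}_1$).

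Finally I would close the argument by solving this linear recursion, proving by induction that $\epsilon_\ell \leq \omega\eta\, \frac{1-\omega^\ell}{1-\omega}$ (a truncated geometric sum), which is bounded above by $\frac{\omega}{1-\omega}\eta$ since $\omega \in [0,1)$; evaluating at $\ell = L$ gives the claim. I expect the only real obstacle to be the invariance lemma and its careful bookkeeping: one must keep straight that pullback and pushforward compose to the identity, and insert the intermediate density $\mathcal{T}_\ell^\sharp f_{\bX_{\ell+1}}$ so that each of the three resulting distances lands on exactly one of the controlled quantities---the previous error $\epsilon_\ell$, the bridging gap $\eta$, or the contraction factor $\omega$. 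Everything else is routine, and in particular the recursion solution is elementary.
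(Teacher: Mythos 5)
Your proof is correct and takes essentially the same route as the paper's: invariance of the Hellinger distance under composition with $\mathcal{T}_\ell$, the error-reduction hypothesis, a triangle inequality through the bridging density, and then a geometric-series bound on the linear recursion $\epsilon_{\ell+1}\leq\omega(\epsilon_\ell+\eta)$ with base case $\epsilon_1\leq\omega\eta$. The only cosmetic difference is that you apply the triangle inequality in the reference space (splitting through $\mathcal{T}_\ell^\sharp f_{\bX_\ell}$), whereas the paper first pushes forward by $\mathcal{T}_\ell$ and splits through $f_{\bX_\ell}$; by the very invariance you prove, these steps are identical.
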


\begin{proof}
Because the Hellinger distance satisfies the triangle inequality, we have
\begin{align*}
 D_\mathrm{H}\big( f_{\smash{\hat\bX_{\ell+1}}}, f_{\bX_{\ell+1}} \big) 
 &=D_\mathrm{H}\big( (\mathcal{T}_{\ell}\circ \mathcal{Q}_{\ell+1} )_\sharp f_\bU, f_{\bX_{\ell+1}} \big) \\
 &=D_\mathrm{H}\big( (\mathcal{Q}_{\ell+1} )_\sharp f_\bU, \mathcal{T}_{\ell}^\sharp f_{\bX_{\ell+1}} \big)\\
 & \leq  \omega D_\mathrm{H}\big( f_\bU, \mathcal{T}_{\ell}^\sharp f_{\bX_{\ell+1}} \big)\\
 & =  \omega D_\mathrm{H}\big( f_{\smash{\hat\bX_{\ell}}}, f_{\bX_{\ell+1}} \big)\\
 &\leq \omega \Big(  D_\mathrm{H}\big( f_{\smash{\hat\bX_{\ell}}}, f_{\bX_{\ell}} \big) +  D_\mathrm{H}\big( f_{\smash{\bX_{\ell}}}, f_{\bX_{\ell+1}} \big) \Big)\\
 &\leq \omega \Big(  D_\mathrm{H}\big( f_{\smash{\hat\bX_{\ell}}}, f_{\bX_{\ell}} \big) + \eta \Big),
\end{align*}
for any $0\leq\ell<L$. With $D_\mathrm{H}\big( f_{\smash{\hat\bX_{1}}}, f_{\bX_{1}} \big) \leq \omega \eta$, we have 
\begin{align*}
 D_\mathrm{H}\big( f_{\smash{\hat\bX_{L}}}, f_{\bX_{L}} \big) 
 &\leq \omega^{L} \eta + \omega^{L-1}\eta+\hdots+  \omega \eta \leq \frac{\omega}{1-\omega} \eta,
\end{align*}
which concludes the proof. 
\end{proof}

The error reduction property in \eqref{eq:err_redu} is a natural assumption, as it essentially states that we need to build a new map $\mathcal{Q}_{\ell+1}$ that is better than an identity map by a factor $\omega < 1$. We also highlight that there are other ways of enriching the approximation power of sparse polynomials by local approximations, see \cite{conrad2016accelerating,yan2019adaptive} and references therein. The polynomial-based KR rearrangement presented here can be potentially integrated with these local approximations to further improve the accuracy.

\subsection{Bridging densities and adaptation}\label{sec:bridging}
Here we discuss the construction of the bridging densities in the context of Bayesian inverse problems. We denote the observed data by $\by \in \R^m$, the forward model by $G:\mathcal{X} \rightarrow \R^m$, and assume the observation noises are independently and identically distributed (i.i.d.) zero mean Gaussian random variables with variance $\sigma_n^2$. The likelihood function takes the form
\[
\mathcal{L}^{y}(\bx) \propto \exp(-\Phi^y(\bx)), \quad \Phi^y(\bx) = \frac1{2\sigma_n^2}\|\by - G(\bx)\|^2, 
\]
and we express the prior density as
\(
\pi_0(\bx) \propto \exp(-\Phi_0(\bx))\,\lambda(\bx).
\)
This way, the target posterior density, which is proportional to $\mathcal{L}^{y}(\bx) \pi_0(\bx)$, can be expressed as 
\[
f_\bX(\bx) = \frac{1}{z} \, \exp(-\Phi^y(\bx) - \Phi_0(\bx))\,\lambda(\bx), \quad z = \int_{\mathcal{X}} \exp(-\Phi^y(\bx) - \Phi_0(\bx))\,\lambda(\bx) \dd \bx.
\]
The following gives some examples of the bridging density.

\begin{example}{Tempering.} \label{ex:bridge1} 
Following the works of \cite{gelman1998simulating}, one may use a sequence of temperatures $0 < \beta_1 \leq \beta_1\leq \cdots\leq \beta_L=1$ to construct bridging densities 
\[
    f_{\bX_{\ell}} = \frac{1}{z_\ell} \exp(-\beta_\ell \Phi^y(\bx) - \Phi_0(\bx))\,\lambda(\bx), \quad z_\ell = \int \exp(-\beta_\ell \Phi^y(\bx) - \Phi_0(\bx))\,\lambda(\bx) \d \bx,
\]
for $\ell = 1, \ldots, L$ by tempering the likelihood function. For Bayesian inverse problems, a temperature $\beta_\ell < 1$ corresponds to a likelihood function with increasing variance. 
\end{example}

\begin{example}{Data batching.} \label{ex:bridge2} 
Consider a data set of with cardinality $m$, one can construct $L$ number of disjoint index sets $\mathcal{J}_1, \ldots, \mathcal{J}_L$ such that $\mathcal{J}_i \cap \mathcal{J}_j = \emptyset$ for all $i \neq j$, $i,j \in [L]$  and $\cup_{i = 1}^L \mathcal{J}_i = [m]$. For each $\mathcal{J}_i$, we can define a data-misfit function 
\[
    \Phi^y(\bx; \mathcal{J}_i) = \frac1{2\sigma_n^2}\sum_{j \in \mathcal{J}_i} (y_j - G_j(\bx) )^2, 
\]
where $G_j:\mathcal{X}\rightarrow \R$ is the forward model for $i$-th observable. This way, a sequence of bridging densities can be defined as
\[
    f_{\bX_{\ell}} = \frac{1}{z_\ell} \exp\Big(- \sum_{i = 1}^\ell \Phi^y(\bx; \mathcal{J}_i) - \Phi_0(\bx)\Big)\,\lambda(\bx), \quad \ell = 1, \ldots, L,
\]
where
\(
    z_\ell = \int \exp(- \sum_{i = 1}^\ell \Phi^y(\bx; \mathcal{J}_i) - \Phi_0(\bx))\,\lambda(\bx)\d \bx.    
\)
This bridging density is particular useful when the computational cost of simulating the forward model grows proportionally to the cardinality of observables. 
\end{example}

Bridging densities in above examples can be generalized to problems with non-Gaussian observation noises and other non-Bayesian inference problems. Note that those two types of bridging densities can be used together. For example, tempering can be used within each data batch. 
As suggested by Proposition \ref{prop:recur}, controlling the Hellinger distance between adjacent bridging densities is important in controlling the overall Hellinger error of the composite map. This can be achieved by adaptively choose either the next temperature or the next data batch in our examples. 

Using Example \ref{ex:bridge1}, we discuss the adaptive tempering method introduced in the sequential Monte Carlo literature, e.g., \cite{beskos2016convergence,kantas2014sequential,myers2021sequential}, to provide a precise control of the Hellinger distance in our bridging densities. We want to control the Hellinger distance between bridging densities, so that it satisfies
\(
D_\mathrm{H}\big( f_{\bX_\ell} ,   f_{\bX_{\ell+1}} \big) \approx \eta
\)
for $\eta \in (0, 1)$.
Defining $\Delta = \beta_{\ell+1} - \beta_\ell$, the squared Hellinger distance can be expressed as
\begin{align}
D_\mathrm{H}(f_{\bX_\ell} ,   f_{\bX_{\ell+1}})^2 & = \frac12 \int \Big(\sqrt{\frac{f_{\bX_{\ell+1}}}{f_{\bX_{\ell}}}}- 1\Big)^2 f_{\bX_\ell} \dd \bx  = 1 - \frac{I}{\sqrt{z_{\ell} z_{\ell{+}1}} }  . \label{eq:hellinger_comput}
\end{align}
where
\[
I = \int \exp\Big( - \big(\frac{\Delta}{2}+\beta_\ell\big)\Phi^y(\bx) - \Phi_0(\bx)\Big) \lambda(\bx) \dd \bx     .
\]
We can utilize the current composition of transport maps $\mathcal{T}_\ell$ to estimate the squared Hellinger distance for selecting the temperature increment $\Delta$. 
The pushforward density $f_{\hat\bX_\ell}= (\mathcal{T}_\ell)_\sharp f_\bU $ defines an importance sampling formula for estimating the terms
\begin{align}
z_{\ell{+}1} & = \int \frac{\exp( - (\Delta+\beta_\ell)\Phi^y(\bx) - \Phi_0(\bx))\lambda(\bx)}{f_{\hat\bX_\ell}(\bx)} f_{\hat\bX_\ell}(\bx) \dd \bx ,\label{eq:hellinger_comput1}  \\z_{\ell} & = \int \frac{\exp( - \beta_\ell \Phi^y(\bx) - \Phi_0(\bx))\lambda(\bx)}{f_{\hat\bX_\ell}(\bx)} f_{\hat\bX_\ell}(\bx) \dd \bx,\label{eq:hellinger_comput2}  \\
I & = \int \frac{\exp( - (\Delta/2+\beta_\ell)\Phi^y(\bx) - \Phi_0(\bx))\lambda(\bx)}{f_{\hat\bX_\ell}(\bx)} f_{\hat\bX_\ell}(\bx) \dd \bx, \label{eq:hellinger_comput3}
\end{align}
in \eqref{eq:hellinger_comput}.
One can draw samples $(X_\ell^i)_{i = 1}^N \sim f_{\smash{\hat\bX_\ell}}$ and compute
\begin{align}
F^i_\ell & = \Phi^y(\bX_\ell^i), \\
K^i_\ell & = \beta_\ell \Phi^y(\bX_\ell^i) + \Phi_0(\bX_\ell^i) - \log \lambda(X_\ell^i) + \log  f_{\hat\bX_\ell}(X_\ell^i)\label{eq:sample_K}
\end{align}
Then, the formulas in \eqref{eq:hellinger_comput}--\eqref{eq:hellinger_comput3} lead to the estimated squared Hellinger distance
\begin{align}
    D_{\ell,+}(\Delta) := 1 -  \frac{\sum_{i = 1}^N \exp( - \frac\Delta2 F_\ell^i - K_\ell^i )}{\sqrt{ \sum_{i = 1}^N \exp(- K_\ell^i )} \sqrt{\sum_{i = 1}^N \exp( -\Delta F_\ell^i - K_\ell^i ) } }. \label{eq:hellinger_est1}
\end{align}
which is  non-negative (with $\Delta \geq 0$) by the Cauchy-Schwartz inequality. 

To numerically satisfy the error reduction property in Proposition \ref{prop:recur}, we also want to estimate the (squared) Hellinger distance between the bridging density $f_{\bX_\ell}$ and its approximation $f_{\smash{\hat\bX_\ell}}$. Following a similar derivation as above, we have
\begin{align}
    D_\mathrm{H}(f_{\smash{\hat\bX_\ell}} ,   f_{\bX_{\ell}})^2   \approx D_\ell := 1 - \frac{\sum_{i = 1}^N \exp(-\frac12 K_\ell^i ) }{\sqrt{ \sum_{i = 1}^N \exp(- K_\ell^i )}}, \label{eq:hellinger_est2}
\end{align}
where $K_\ell^i$ is defined in \eqref{eq:sample_K}. The estimator $D_\ell$ is non-negative by Jensen's inequality. The estimators $D_{\ell,+}(\Delta)$ and $D_{\ell}$ are shift-invariant to $K^i_\ell$.

\begin{algorithm}[h]
\caption{Self-reinforced KR rearrangement with adaptive tempering.}
\label{alg:dirt_adapt}
\begin{algorithmic}[1]
\Procedure{LayeredKR}{$\Phi^y$, $\Phi_0$, $\lambda$, $(\psi_k)_{k=0}^\infty$, $\beta_1$, $\omega$, $\eta$} 
\State $\ell \leftarrow 1$
\State $\mathcal{F}_{\smash{\hat\bX}_\ell} \leftarrow$ \Call{ConstructKR}{$\Phi', \lambda, (\psi_k)_{k=0}^\infty, \tau$} \Comment{{\scriptsize $\Phi' = \beta_\ell \Phi^y + \Phi_0$, $\tau=\omega\eta$}}
\State $\mathcal{T}_{\ell} \leftarrow \mathcal{F}_{\smash{\hat\bX}_\ell}^{-1}\circ \mathcal{F}_\bU$  \Comment{{\scriptsize $\mathcal{Q}_{1}\equiv\mathcal{T}_{1}$}}

\While{$\beta_\ell < 1$}
\State $\ell \gets \ell {+} 1$
\State $(\beta_{\ell}, \varepsilon_{\ell-1}) \gets$ \Call{NextBeta}{$\Phi^y$, $\Phi_0, \beta_{\ell-1},\mathcal{T}_{\ell-1}, \eta$} 
\State $\mathcal{F}_{\smash{\hat\bX}_\ell} \gets$ \Call{ConstructKR}{$\Phi', \lambda, (\psi_k)_{k=0}^\infty, \tau$} \Comment{{\scriptsize $\Phi' {=} \log \lambda {-} \log \mathcal{T}_\ell^\sharp f_{\bX_{\ell}}$, $\tau {=} \omega \varepsilon_{\ell{-}1}$}}
\State $\mathcal{Q}_{\ell}\gets \mathcal{F}_{\smash{\hat\bX}_\ell}^{-1}\circ\mathcal{F}_\bU$, $\mathcal{T}_{\ell} \gets \mathcal{T}_{\ell-1}\circ \mathcal{Q}_{\ell}$
\EndWhile
\State $L\gets\ell$ and \Return $\mathcal{T}_L$
\EndProcedure
\end{algorithmic}
\begin{algorithmic}[1]
\Procedure{NextBeta}{$\Phi^y$, $\Phi_0, \beta_\ell, \mathcal{T}, \eta$}
    \State Draw samples $\bX_\ell^i = \mathcal{T}(\bU_i)$ where $\bU_i\sim f_\bU$ for $i = 1, \ldots, N$. \vspace{1pt}
    \State $\varepsilon_\ell \gets \sqrt{D_{\ell}}$ where $D_{\ell}$ is given in \eqref{eq:hellinger_est2}.\vspace{1pt} \Comment{{\scriptsize Error estimate}}
    \State Find $\Delta$ such that $D_{\ell,+}(\Delta) = \eta^2$, where $D_{\ell,+}(\Delta)$ is given in \eqref{eq:hellinger_est1}.\vspace{1pt}
    \State \Return $\beta' = \min(\beta + \Delta, 1)$ and $\varepsilon_\ell$.\vspace{2pt}
\EndProcedure
\end{algorithmic}
\end{algorithm}


\section{Numerical examples}\label{sec:numerics}

We demonstrate the efficiency of our proposed methods on three inverse problems governed by ODEs and PDEs. We compare the self-reinforced KR rearrangements built using sparse polynomials (cf. Section \ref{sec:adapt_sirt}) and the tensor-train (TT) construction of \cite{cui2021deep}, compare the self-reinforced and single-layered constructions, and test the performance of various bridging densities. We also offer some concluding remarks at the end to discuss the numerical results. 
Note that the weighted least square approximation and the adaptation used in Alg.~\ref{alg:dirt_adapt} are subject to randomized estimates. Thus, we compute 9 repeated experiments to report the means and standard deviations of performance results in all examples.

\subsection{Susceptible-Infected-Removed model}

We first apply our methods to estimate the parameters of a compartmental susceptible-infectious-removed (CSIR) model. The CSIR model considered here is a simplified version of the model in \cite{DGKP-SEIR-2021}. We first divide a geographical area  into $K\in\mathbb{N}$ compartments, and denote the numbers of susceptible, infectious and removed
individuals in the $k$th compartment at a given time $t$ by $S_k(t)$,
$I_k(t)$ and $R_k(t)$, respectively. Then, the interaction among the individuals within each compartment and across different compartments is modelled by the following system of differential equations
\begin{align}
\left\{
\begin{array}{ll}
    \dot{S}_k &  =  -\theta_k S_k I_k  + {\frac{1}{2}}\sum_{j \in \mathcal{J}_k} (S_{j} - S_k),\\
    \dot{I}_k & =  \theta_k S_k I_k - \nu_k I_k  + {\frac{1}{2}}\sum_{j \in \mathcal{J}_k} (I_{j}  - I_k), \\
    \dot{R}_k & =  \nu_k I_k +{\frac{1}{2}}\sum_{j \in \mathcal{J}_k} (R_{j} - R_k),
\end{array}
\right. \label{eq:sir}
\end{align}
where $\mathcal{J}_k$ is the index set containing all neighbours of the $k$th compartment.
The behaviour of the above system of differential equations is governed by parameters $\theta_k\in \mathbb{R}$ and $\nu_k \in \mathbb{R}$ for $k = 1, \ldots, K$, which represent the infection and recovery rate in the $k$th compartment, respectively.

The differential equations in \eqref{eq:sir} are solved for the time interval $t \in [0,5]$ with fixed inhomogeneous initial states \( S_k(0) = 99 - K + k, \) \(I_k(0) = K+1 - k,\) and \( R_k(0) = 0 \) for \(k=1,\ldots,K\). We aim to estimate the unknown parameters $\bx =
(\theta_1,\nu_1, \ldots , \theta_K,\nu_K) \in \R^{2K}$ from noisy observations of $I_k(t), k=1,\ldots,K,$ at $6$ equidistant time points. We specify a uniform prior on the domain $[0,2]$ for each of $\theta_k$ and $\nu_k$, which leads to $\pi_0(\bx) = \prod_{k=1}^{2K}\indi_{[0,2]}(x_k)$. In this experiment, we use a``true'' parameter
\(
\bx_\text{true} = [0.1, 1, \ldots, 0.1, 1],
\)
to simulate synthetic observations
\begin{equation*}
  \by_{k,j} = I_k\Big(\frac{5j}{6} ; \bx_\text{true}\Big) + \eta_{k,j}, \quad k=1,\ldots,K, \quad j=1,\ldots,6,
\end{equation*}
where $\eta_{k,j}$ is a zero-mean standard Gaussian measurement noise $\mathcal{N}(0,1)$. This gives the likelihood function 
\begin{equation}
  \mathcal{L}^{y}(\bx) \propto \exp(-\Phi^y(\bx)), \quad \Phi^y(\bx)= \frac12 \sum_{k=1}^{K} \sum_{j=1}^{6} \Big\{I_k\Big(\frac{5j}{6}; \bx\Big) - \by_{k,j}\Big\}^2.
\end{equation}

We consider a compartment model defined on a one-dimensional lattice, in which the $k$th compartment is only connected to compartments with adjacent indices $k-1$ and $k+1$. We impose periodic boundary conditions such that $Z_{K+1} = Z_{1}$ and $Z_{0} = Z_{K}$ for $Z \in \{S, I, R\}$. By varying the number of compartments, and hence the parameter dimension, we can test the scalability of proposed methods. The differential equations in \eqref{eq:sir} are solved for the time interval $t \in [0,5]$ with fixed inhomogeneous initial states \( S_k(0) = 99 - K + k, \) \(I_k(0) = K+1 - k,\) and \( R_k(0) = 0 \) for \(k=1,\ldots,K\).
The differential equations are solved by the explicit Runge--Kutta method with adaptive time steps that control both absolute and relative errors to be within $10^{-6}$. 

\paragraph{Comparison of single-layered and self-reinforced constructions} We demonstrate the benefit of using the multi-layered construction on the CSIR model with a single compartment, i.e., $K = 1$. We set up the self-reinforced KR rearrangement using adaptive tempering, Legendre polynomials with a maximum order of 30 on each coordinate, and a stopping tolerance of $0.05$ for the least square approximation. During the construction, Alg.~\ref{alg:dirt_adapt} generates three layers of sparse polynomial approximations with cardinalities $\{293 \pm 55, 268 \pm 18, 189\pm 2\}$. It takes a total of $2420 \pm 348$ density evaluations to construct the composite map. The resulting Hellinger error is $0.0181 \pm 0.0046$. To set up the single-layered KR rearrangement, we use Legendre polynomials with a maximum order of 60 on each coordinate. In this example, by using all polynomial basis with cardinality $3721$ (which takes $14884$ density evaluations in the weighted least square procedure), the resulting KR rearrangement yields a Hellinger error $0.375\pm 0.023$. Compared to the single-layered construction, the self-reinforced construction has an error of about 5 percent but only uses 20 percent of the total number of basis functions. It is worth mentioning that we are not able to apply the single-layered KR rearrangement for problems with larger numbers of compartments. Figure~\ref{fig:csir_single} shows the comparison of the approximate posterior density constructed by the self-reinforced and single-layer constructions. 

\begin{figure}[h]
\begin{tikzpicture}
\begin{axis}[%
width=0.36\linewidth,
height=0.36\linewidth,
title style={at={(0.5,0.85)}},
x label style={at={(0.5,-0.05)}},
y label style={at={(-0.1,0.5)},rotate=0},
xmin=0.0,xmax=2,ymin=0.35,ymax=1.8,
xtick={0.5,1.5},
ytick={0.7,1.4},
xlabel={$\theta_1$},ylabel={$\nu_1$},title={$f_{\bX_1}$},
]
\pgfplotsset{
    contour/every contour plot/.style={labels=false},
}
\addplot[contour prepared,contour prepared format=matlab,contour/draw color={black},opacity=0.3,line width=2pt] table{bridge_1.dat};
\addplot[contour prepared,contour prepared format=matlab,line width=0.5pt] table{bridge_approx_1.dat}; 
\end{axis}
\end{tikzpicture}
\hspace{-1em}
\begin{tikzpicture}
\begin{axis}[%
width=0.36\linewidth,
height=0.36\linewidth,
title style={at={(0.5,0.85)}},
x label style={at={(0.5,-0.05)}},
y label style={at={(-0.1,0.5)},rotate=0},
xmin=0.05,xmax=0.32,ymin=0.65,ymax=1.28,
xtick={0.1,0.25},
ytick={0.8,1.1},
xlabel={$\theta_1$},ylabel={$\nu_1$},title={$f_{\bX_2}$},
]
\pgfplotsset{
    contour/every contour plot/.style={labels=false},
}
\addplot[contour prepared,contour prepared format=matlab,contour/draw color={black},opacity=0.3,line width=2pt] table{bridge_2.dat};
\addplot[contour prepared,contour prepared format=matlab,line width=0.5pt] table{bridge_approx_2.dat}; 
\end{axis}
\end{tikzpicture}
\hspace{-1em}
\begin{tikzpicture}
\begin{axis}[%
width=0.36\linewidth,
height=0.36\linewidth,
title style={at={(0.5,0.85)}},
x label style={at={(0.5,-0.05)}},
y label style={at={(-0.1,0.5)},rotate=0},
xmin=0.085,xmax=0.145,ymin=0.85,ymax=1.06,
ytick={0.9,1},
xtick={0.1,0.13},
xticklabels={0.1,0.13},
xlabel={$\theta_1$},ylabel={$\nu_1$},title={$f_{\bX_3}=f_{\bX}$},
legend style={at={(0.55,1)},anchor=north west,fill=none},
legend cell align={left},
legend image post style={
    sharp plot,
    draw=\pgfkeysvalueof{/pgfplots/contour/draw color},
},
legend entries={exact,self-reinforced,single-layered}
]
\pgfplotsset{
    contour/every contour plot/.style={labels=false},
}
\addplot[contour prepared,contour prepared format=matlab,contour/draw color={black},opacity=0.3,line width=2pt] table{bridge_3.dat};
\addplot[contour prepared,contour prepared format=matlab,line width=0.5pt] table{bridge_approx_3.dat}; 
\addplot[contour prepared,contour prepared format=matlab,contour/draw color={mapped color!50!black},dashed,line width=0.5pt] table{poly_sirt.dat}; 
\end{axis}
\end{tikzpicture}

\begin{tikzpicture}
\begin{axis}[%
width=0.36\linewidth,
height=0.36\linewidth,
title style={at={(0.5,0.85)}},
x label style={at={(0.5,-0.05)}},
y label style={at={(-0.1,0.5)},rotate=0},
xmin=0,xmax=0.5,ymin=0.2,ymax=1.4,
xtick={0.1,0.4},
ytick={0.3,1.3},
xlabel={$\theta_1$},ylabel={$\nu_1$},title={$\mathcal{Q}_1{}^\sharp f_{\bX_2}$},
]
\pgfplotsset{
    contour/every contour plot/.style={labels=false},
}
\addplot[contour prepared,contour prepared format=matlab,contour/draw color={black},opacity=0.3,line width=2pt] table{pullback_2.dat};
\addplot[contour prepared,contour prepared format=matlab,line width=0.5pt] table{pullback_approx_2.dat}; 
\end{axis}
\end{tikzpicture}
\hspace{-1em}
\begin{tikzpicture}
\begin{axis}[%
width=0.36\linewidth,
height=0.36\linewidth,
title style={at={(0.5,0.85)}},
x label style={at={(0.5,-0.05)}},
y label style={at={(-0.1,0.5)},rotate=0},
xmin=0,xmax=1,ymin=0.4,ymax=1.5,
xtick={0.2,0.8},
ytick={0.5,1.4},
xlabel={$\theta_1$},ylabel={$\nu_1$},title={$(\mathcal{Q}_1\circ\mathcal{Q}_2)^\sharp f_{\bX_3}$},
legend style={at={(0.85,1)},anchor=north west,fill=none},
legend cell align={left},
legend image post style={
    sharp plot,
    draw=\pgfkeysvalueof{/pgfplots/contour/draw color},
},
legend entries={exact,approximation}
]
\pgfplotsset{
    contour/every contour plot/.style={labels=false},
}
\addplot[contour prepared,contour prepared format=matlab,contour/draw color={black},opacity=0.3,line width=2pt] table{pullback_3.dat};
\addplot[contour prepared,contour prepared format=matlab,line width=0.5pt] table{pullback_approx_3.dat}; 
\end{axis}
\end{tikzpicture}
\hspace{-1em}\vspace{-1em}

\caption{CSIR example with $K=1$. Top row: the thick grey contours show the bridging densities, the solid contours  show the self-reinforced approximations, the dash contours on the right show the single-layered approximation. 
Bottom row: pullback densities used in the self-reinforced construction (thick grey) and the corresponding sparse polynomial approximations (solid).}\label{fig:csir_single}
\end{figure}
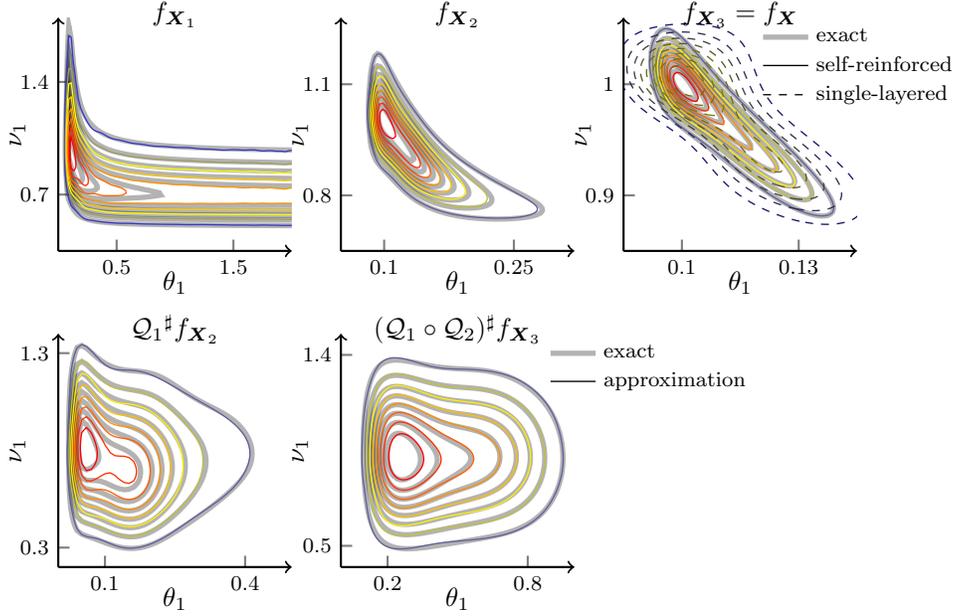

\paragraph{Sparse polynomials versus tensor trains} We test the dimension scalability of the self-reinforced sparse-polynomial construction of the KR rearrangement and compare its performance with the TT-based construction of \cite{cui2021deep}. We test CSIR models with $K \in \{1,2,3,4\}$ numbers of compartments. For both  sparse-polynomial-based and TT-based constructions, we use a uniform reference measure and Legendre polynomials with a maximum order $30$ on each coordinate. To build self-reinforced KR rearrangements by adaptive tempering, Alg.~\ref{alg:dirt_adapt} uses an initial tempering parameter $\beta_1=10^{-3}$ and the adaptation threshold $\eta = 0.1$.
We first set up the TT-based construction, in which the polynomial-based KR rearrangements in Alg.~\ref{alg:dirt_adapt} are replaced by TT-based KR rearrangements. At each layer, the TT approximation is carried out with $2$ iterations of TT-Cross, SVD truncation tolerance $10^{-2}$ and maximal TT rank $10$. In the sparse-polynomial-based construction, we match the number of density evaluations per layer to that of the TT-based one rather than setting a stopping tolerance on the estimated error. The Hellinger distances of the resulting maps, the number of density evaluations used and the number of layers in the composite maps for each parameter dimension ($d = 2K$) are reported in Fig.~\ref{fig:csir_poly_tt}. Compared to the TT-based construction, the polynomial-based construction is slightly less accurate for lower dimensional settings and the accuracy gap is reduced with increasing dimension.

\begin{figure}[h!]
\centering
\begin{tikzpicture}
\begin{axis}[%
width=0.48\linewidth,
height=0.35\linewidth,
xmode=normal,
xmin=1,xmax=9, 
ymode=log,
xlabel={$d$},
ylabel={$D_H(f_X,f_{\hat X})$},
legend style={at={(0.98,0.02)},anchor=south east},
nodes near coords,point meta = explicit symbolic,
x label style={at={(0.5,-0.1)}},
]
\addplot+[error bars/.cd,y dir=both,y explicit] coordinates{
(2, 0.0014) +- (2, 0.000466) 
(4, 0.0031) +- (4, 0.000742) 
(6, 0.0106) +- (6, 0.0012)  
(8, 0.0208) +- (8, 0.0042) 
}; \addlegendentry{TT};
\addplot+[error bars/.cd,y dir=both,y explicit] coordinates{
(2, 0.0053) +- (2, 0.0020) 
(4, 0.0063) +- (4, 0.0013)
(6, 0.0153) +- (6, 0.0026)
(8, 0.0286) +- (8, 0.0053)
}; \addlegendentry{poly};
\end{axis}
\end{tikzpicture}
\begin{tikzpicture}
\begin{axis}[%
width=0.48\linewidth,
height=0.35\linewidth,
xmode=normal,
xmin=1,xmax=9, 
ymode=log,
ymin=5e2,
xlabel={$d$},
ylabel={$N_{\rm evals}$},
legend style={at={(0.98,0.02)},anchor=south east},
x label style={at={(0.5,-0.1)}},
]
\addplot+[nodes near coords,point meta = explicit symbolic, every node near coord/.style={anchor=north west,inner sep=2pt}] coordinates{
(2, 820) +- (2, 16) [3]
(4, 29140) +- (4, 573)  [5]
(6, 107787) +- (6, 7390)  [6]
(8, 256800) +- (8, 26280)  [7]
}; \addlegendentry{TT};
\addplot+[nodes near coords,point meta = explicit symbolic, every node near coord/.style={anchor=south east,inner sep=2pt}] coordinates{
(2, 1330)  +- (2, 36) [4]
(4, 29097)  +- (4, 608) [5]
(6, 109800)  +- (6, 4223) [6]
(8, 259148) [7]
}; \addlegendentry{poly};
\end{axis}
\end{tikzpicture}\vspace{-12pt}
\caption{CSIR example. The Hellinger distance (left) and the number of density evaluations (right) for each parameter dimension ($d = 2K$). The labels on the right plot indicate the number of layers in the composite map.}\label{fig:csir_poly_tt}
\end{figure}
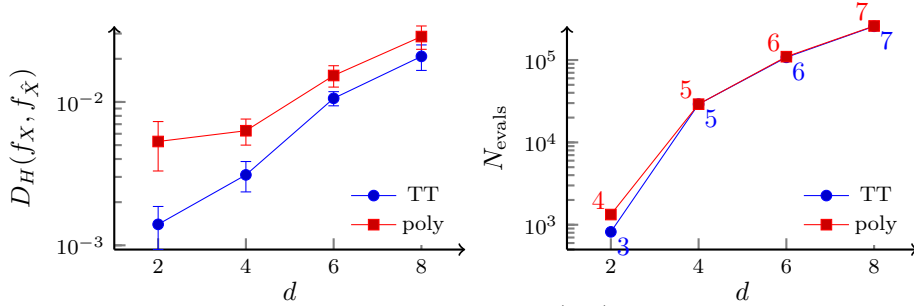

\subsection{Elliptic PDE}
We then consider the parameter estimation problem of an elliptic PDE commonly arisen in groundwater modelling. Given a problem domain $D=[0,1]^2$, we want to estimate unknown diffusivity field $\kappa(s,\bX), s\in D$ from partial observation of the water table $u(s,\bX)$, which is a function that satisfies the PDE
\begin{align}
    -\nabla \cdot ( \kappa(s,\bX) \nabla u(s,\bX)) & = 0, \quad s\in(0,1)^2 ,  \label{eq:pde}
\end{align}
with Dirichlet boundary conditions
$u|_{s_1=0} = 1+s_2/2$ and $u|_{s_1=1} = -\sin(2\pi s_2)-1$
imposed horizontally and no-flux boundary conditions $\partial u / \partial s_2|_{s_2=0} = \partial u / \partial s_2|_{s_2=1} = 0$ imposed vertically.
For each realization of $\bX$, we apply the Galerkin method with continuous, bilinear finite elements to numerically solve \eqref{eq:pde}. The finite element solution $u_h$ is computed on a uniform rectangular grid on $D$ with a mesh size $h = 1/64$ along each of the
coordinates of $D$.

We model the logarithm of the diffusivity field using a zero mean uniform prior random field with the covariance operator defined by the Mat\'ern covariance function
\[
  C(s,t) = \frac{2^{1-\nu}}{\Gamma(\nu)}\Big(\sqrt{2\nu} \frac{\|s-t\|_2}{\ell}\Big)^{\nu} K_{\nu}\Big(\sqrt{2\nu}\frac{\|s-t\|_2}{\ell}\Big), \quad s,t \in D,
\]
where $\nu=2$ and $\ell=1$.%
Using the Karhunen--L\'oeve (KL) expansion, $\log\kappa(s,\bX)$ yields a finite-dimensional approximation
\[
    \log\kappa(s,\bX) \approx \sum_{k = 1}^{d} X_k \sqrt{\omega_k} \varphi_k(s) ,
\]
where $\{\varphi_k(s), \omega_k\}$ is the $k$th eigenpair of the covariance operator in the descending order of eigenvalues and each random coefficient $X_k$ follows the uniform prior with zero mean and unit variance, i.e., $X_k \sim \mathrm{uniform}(-\sqrt{3},\sqrt{3})$.

To setup the observation model, we measure the water table $u(s,X)$ at
$m = 15 \times 15$ locations defined as the vertices of a uniform Cartesian grid
on $D=[0,1]^2$ with grid size $1/(\sqrt m+1)$. Measurements are
corrupted by i.i.d. Gaussian noise with zero mean and variance $\sigma_n^2 = 10^{-2}$.
This leads to the parameter-to-observable map
\begin{equation}\label{eq:pde-data}
y_i = G_i(\bx) + \eta_i, \quad G_{i_1 + (i_2-1)\sqrt m}(\bx) = u_h((\frac{i_1}{\sqrt m+1}, \frac{i_2}{\sqrt m+1}),\bx) 
\end{equation}
for $i_1,i_2=1,\ldots,15$, where $\eta_i \sim \mathcal{N}(0,\sigma_n^2)$.
Synthetic data is generated as $\by = G(\bx_{\rm true}) + \eta$ with $\bx_{\rm true} = (0.5,\ldots,0.5)$.

\paragraph{Sparse polynomials versus tensor trains} We use varying number of KL basis functions, $d \in \{6, 8, 10, 12, 14, 16\}$, to test the dimension scalability of the self-reinforced sparse-polynomial construction of the KR rearrangement and compare its performance with the tensor-train (TT)  construction of \cite{cui2021deep}. For both  sparse-polynomial-based and TT-based constructions, we use a uniform reference measure and Legendre polynomials with a maximum order $20$ on each coordinate. To build self-reinforced KR rearrangements by adaptive tempering, Alg.~\ref{alg:dirt_adapt} uses an initial tempering parameter $\beta_1=10^{-3}$ and the adaptation threshold $\eta = 0.5$.
At each layer, the TT approximation is carried out with $2$ iterations of TT-Cross, SVD truncation tolerance $10^{-2}$ and maximal TT rank $5$. In the sparse-polynomial-based construction, we match the number of density evaluations per layer to that of the TT-based one rather than setting a stopping tolerance on the estimated error. The Hellinger divergence of the resulting maps, the number of density evaluations used and the number of layers in the composite maps for each KL dimension $d$ are reported in Fig.~\ref{fig:elliptic_poly_tt}. Compared to the TT-based construction, the polynomial-based construction is more accurate for all $d$, and requires less number of density evaluations for $d\leq14$. For $d = 16$, the TT-based construction needs less number of density evaluations and the accuracy gap with the polynomial-based construction becomes smaller.

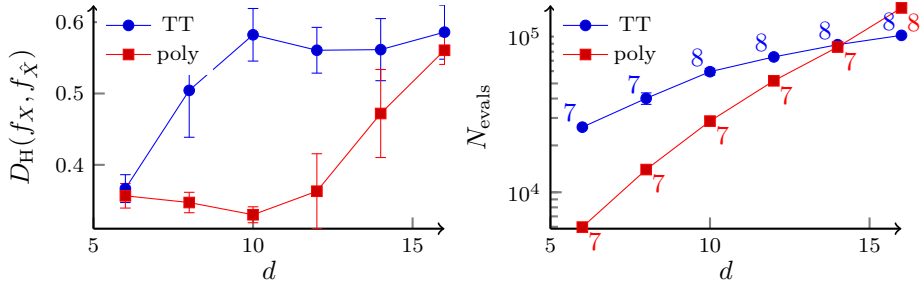
\begin{figure}[h]
\centering
\begin{tikzpicture}
\begin{axis}[%
width=0.48\linewidth,
height=0.35\linewidth,
xmode=normal,
xmin=5,xmax=16,
ymode=normal,
xlabel={$d$},
ylabel={$D_{\rm H}(f_X,f_{\hat X})$},
legend style={at={(0.01,1)},anchor=north west},
nodes near coords,point meta = explicit symbolic,
every node near coord/.style={anchor=south east},
x label style={at={(0.5,-0.1)}},
]
\addplot+[error bars/.cd,y dir=both,y explicit] coordinates{
(6 , 0.366736)  +- (6 , 0.0195021) 
(8 , 0.504362)  +- (8, 0.0656966) 
(10 , 0.582281)  +- (10, 0.0368477) 
(12,  0.560613)  +- (12 , 0.03207)
(14,  0.561472)  +- (14,  0.0435164)
(16, 0.585975)  +- (16 , 0.0378927) 
}; \addlegendentry{TT};
\addplot+[error bars/.cd,y dir=both,y explicit] coordinates{
(6  , 0.356621) +- (6,  0.0171799) 
(8 , 0.347226) +- (8,  0.014316) 
(10 , 0.330084) +- (10, 0.0111817)
(12,  0.363058) +- (12, 0.0525189) 
(14,  0.471964) +- (14,    0.0616972)
(16, 0.560603) +- (16, 0.0198323)
}; \addlegendentry{poly};
\end{axis}
\end{tikzpicture}
\begin{tikzpicture}
\begin{axis}[%
width=0.48\linewidth,
height=0.35\linewidth,
xmode=normal,
xmin=5,xmax=16,
ymode=log,
xlabel={$d$},
ylabel={$N_{\rm evals}$},
legend style={at={(0.01,1)},anchor=north west},
x label style={at={(0.5,-0.1)}},
]
\addplot+[nodes near coords,point meta = explicit symbolic, every node near coord/.style={anchor=south east,inner sep=2pt},error bars/.cd,y dir=both,y explicit] coordinates{
(6, 26198)  +- (6, 374) [7]
(8, 40103)  +- (8, 3402) [7]
(10, 59507)  +- (10, 2986) [8]
(12, 74050)  +- (12, 643) [8]
(14, 88722)  +- (14, 506) [8]
(16, 101885)  +- (16, 609) [8]
}; \addlegendentry{TT};
\addplot+[nodes near coords,point meta = explicit symbolic, every node near coord/.style={anchor=north west,inner sep=2pt},error bars/.cd,y dir=both,y explicit] coordinates{
(6, 5972) +- (6, 138) [7]
(8, 13990) +- (8, 282) [7]
(10, 28643) +- (10, 1286) [7] 
(12, 52019) +- (12, 2209) [7]
(14, 85680) +- (14, 0) [7]
(16, 152887) +- (16, 6460) [8] 
}; \addlegendentry{poly};
\end{axis}
\end{tikzpicture}\vspace{-12pt}
\caption{Elliptic example. The Hellinger distance (left) and the number of density evaluations (right) for each parameter dimension. The labels on the right plot indicate the number of layers in the composite map.}\label{fig:elliptic_poly_tt}
\end{figure}

\paragraph{Tempering versus data-batching} We also compare the tempering approach (cf. Example \ref{ex:bridge1}) and the data-batching approach (cf. Example \ref{ex:bridge2}) for building the composite maps based on sparse polynomials. The Hellinger divergence of the resulting maps, the number of density evaluations used and the number of layers in the composite maps for $d \in \{6,12\}$ are reported in Table~\ref{tab:elliptic_bridging}. Despite that the data-batching approach is marginally more accurate, it doubles the number of layers in the composite map and also doubles the number of density evaluations needed.

\begin{table}[h]
\footnotesize
\caption{Elliptic example. The Hellinger distance,  number of density evaluations and number of layers in the composite map for different parameter dimensions and different bridging densities.}\vspace{-6pt}\label{tab:elliptic_bridging}
\centering
\begin{tabular}{c|ccc|ccc}
$d$ & \multicolumn{3}{c|}{tempering}                        & \multicolumn{3}{c}{data-batching} \\
    & $D_H(f_X,f_{\hat X})$ & $N_{evals}$ & $\#\{\beta\}$ & $D_H(f_X,f_{\hat X})$ & $N_{evals}$ & $\#\{\beta\}$ \\\hline
6   & $0.36\pm 0.017$ & $5972 \pm 139$ & 7 &  $0.30\pm0.023$  & $14378\pm568$ & $16$  \\
12  & $0.36\pm 0.053$ & $52019\pm 2209$ & 7 & $0.33\pm 0.017$ & $138756\pm 4557$ & $16$ \\
\end{tabular}
\end{table}

\paragraph{Comparison of single-layered and self-reinforced constructions} To make the single-layered construction feasible, we only use $9$ observations at $m = 3\times 3$ locations specified in \eqref{eq:pde-data}. To demonstrate the comparison for changing posterior concentration and parameter dimensionality, we consider three experiments: a reference experiment with $d = 6$ and a large observation noise variance $\sigma_{\rm n}^2 = 10^{-1}$; an experiment with an increasing dimension $d = 12$ and the same variance $\sigma_{\rm n}^2 = 10^{-1}$; and an experiment with a dimension $d = 6$ and a reduced variance $\sigma_{\rm n}^2 = 10^{-2}$. For both constructions, we set a stopping tolerance for the least square procedure to be $0.01$ and a maximum cardinality of $3.2 \times 10^4$ for polynomial basis functions. The Hellinger distances, the numbers of density evaluations and the total numbers of basis functions used by both constructions are reported in Table \ref{tab:elliptic_single}. In this case, the self-reinforced construction performs marginally more efficient than the single-layered construction on the reference experiment. With either increasing dimensionality or decreasing observation noise variance (so the posterior is more concentrated), the self-reinforced construction achieves much better computational efficiency than the single-layered construction. For both $d = 12$ and $\sigma_{\rm n}=10^{-2}$ experiments, the single-layered construction uses up the maximum cardinality of basis functions. For $d = 12$, it achieves comparable accuracy to the self-reinforced construction with $6.5$ times as many basis functions. For $\sigma_{\rm n}=10^{-2}$, its error is about an order of magnitude higher than that of the self-reinforced construction.

\begin{table}[h]
\footnotesize
\caption{Elliptic example. The Hellinger distances, numbers of density evaluations and total numbers of basis functions for single-layered and self-reinforced constructions. }\vspace{-6pt}\label{tab:elliptic_single}
\centering
\setlength\tabcolsep{2.5pt}
\begin{tabular}{r|cccc|ccc}
$(d,\sigma_{\rm n}^2)$ & \multicolumn{4}{c|}{self-reinforced}                        & \multicolumn{3}{c}{single-layered} \\
    & $D_H(f_X,f_{\hat X})$ & $N_{evals}$ & $\#$ basis & \hspace{-6pt}$\#\{\beta\}$ & $D_H(f_X,f_{\hat X})$ & $N_{evals}$ & $\#$ basis \\\hline
$(6, 10^{-1})$   & 4.1e-3$\pm$8.6e-4 & 2.5e3$\pm$2.8e2 & 8.6e2$\pm$8.4e1 & 2 & 4.9e-3$\pm$1.4e-3  & 9.5e3$\pm$8.7e2 & 3.2e3$\pm$2.3e2 \\
$(12, 10^{-1})$   & 4.8e-3$\pm$1.1e-4 & 1.4e4$\pm$2.4e3 & 4.9e3$\pm$8.0e2 & 2 & 6.6e-3$\pm$4.6e-4  & 9.6e4 & 3.2e4 \\
$(6, 10^{-2})$  & 4.3e-3$\pm$8.5e-4 & 2.5e4$\pm$3.3e3 & 8.1e3$\pm$7.9e2 & 3 & 7.6e-2$\pm$9.4e-3 & 1.28e5 & 3.2e4 
\end{tabular}
\end{table}

\subsection{Linear elasticity analysis of a wrench}

Lastly, we consider a more complicated linear elasticity problem \cite{lam2020multifidelity,smetana2020randomized}, in which the displacement field $u:\mathcal{D} \rightarrow \mathbb{R}^{2}$ of a physical body $\mathcal{D} \subset \R^2$ is in a equilibrium state subject to external forces $f$. Given the strain field $\varepsilon(u) = \frac{1}{2}(\nabla u + \nabla u^\lowsup{\top})$, the displacement $u$  satisfies the PDE 
\[
\mathrm{div}\left(K:\varepsilon(u)\right) = f \quad \text{on} \quad \mathcal{D} \subset \mathbb{R}^{2},
\]
where $K$ is the Hooke tensor such that
\[
K:\varepsilon(u) = \frac{\exp(\theta)}{1+\nu} \varepsilon(u) + \frac{\nu \exp(\theta)}{1-\nu^2} \mathrm{trace}(\varepsilon(u))\begin{pmatrix} 1&0 \\0&1\end{pmatrix}
,\]
$\nu=0.3$ is Poisson's ratio, and $\exp(\theta):\mathcal{D}\rightarrow\R_{>0}$ is spatially varying Young's modulus. 
We aim to estimate the logarithm of Young's modulus, $\theta:\mathcal{D}\rightarrow\R$, using partial observations of the displacement. 

\begin{figure}[h]
\centering
\begin{tikzpicture}[scale=1.4, every node/.style={scale=1.3}]
\node at (0,0.25) {\includegraphics[width=0.35\linewidth,height=0.21\linewidth]{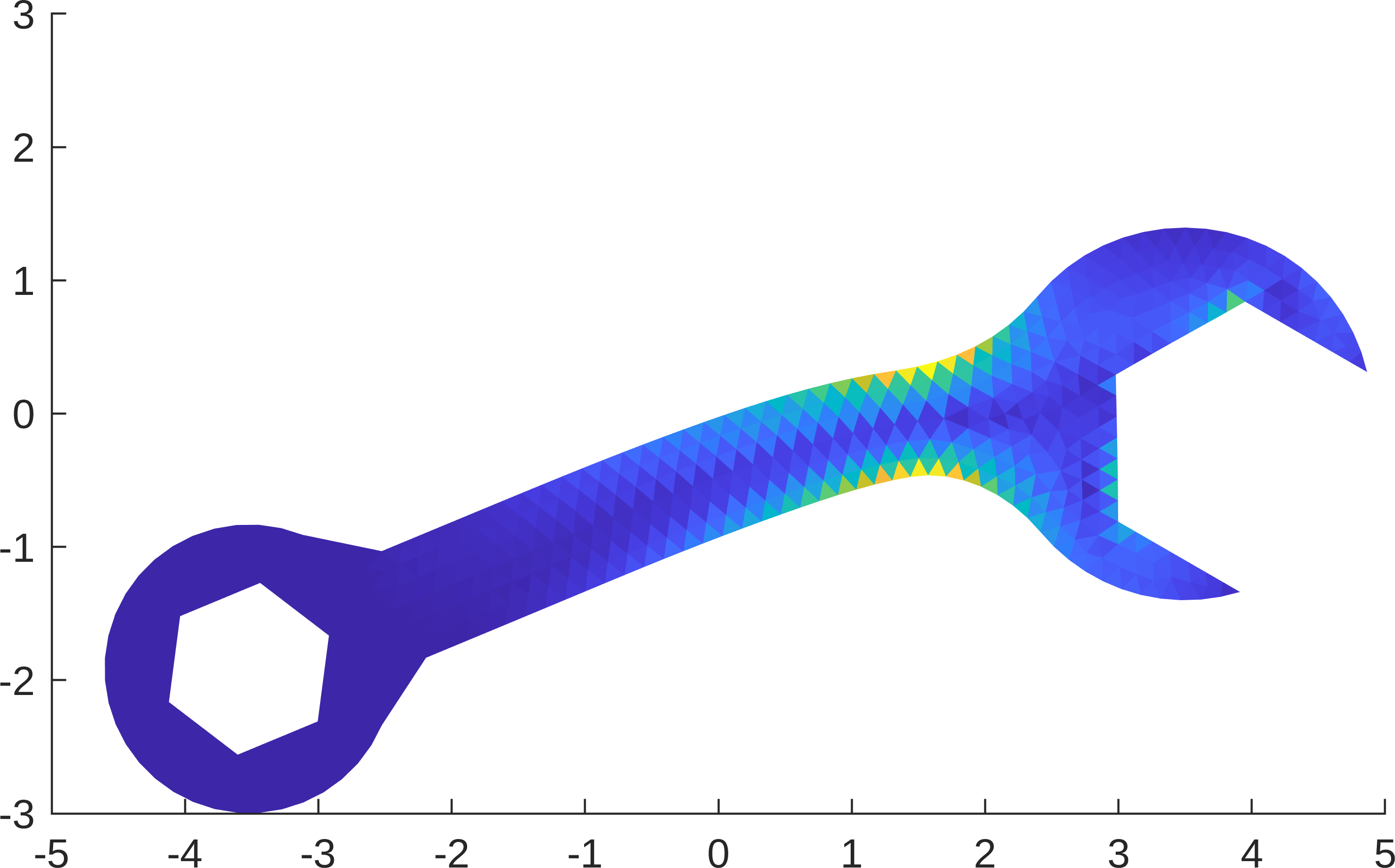}};
\node at (0,0.25) {\includegraphics[width=0.35\linewidth,height=0.21\linewidth]{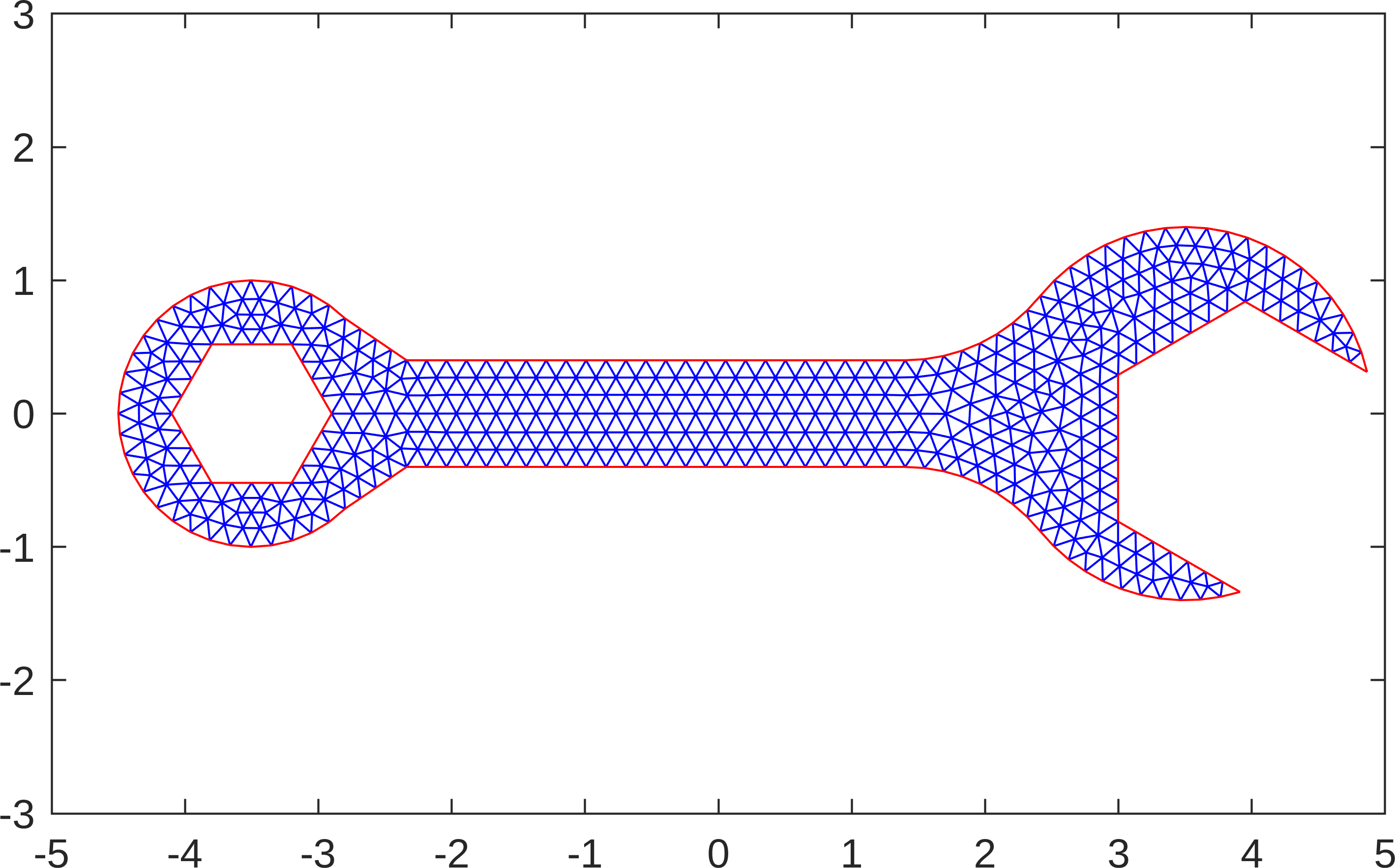}};  
\node[rotate=-32] at (2.4*0.8,-0.2*0.7) {\footnotesize$u=0$};
\node[rotate=-32] at (2.6*0.8,0.4*0.7) {\footnotesize$u=0$};
\draw[->,red,line width=2pt] (-1,1.4) -- (-1,0.63);
\draw[->,red,line width=2pt] (0.85,1.4) -- (0.85,0.63);
\node[red,anchor=west] at (-1.2*0.7,1.3*0.7) {$f {=} \begin{psmallmatrix}\;\;0\\{-}1\end{psmallmatrix}$};
\foreach \x in { -1.88965, -1.74012, -1.59059, -1.44106, -1.29153, -1.142, -0.992471, -0.842941, -0.693412, -0.543882, -0.394353, -0.244824, -0.0952941, 0.0542353, 0.203765, 0.353294, 0.502824, 0.652353, 0.801882, 0.951412, 1.10094, 1.25047, 1.4} {
\node[ellipse,draw=green!50!black,fill=green!50!black,inner sep=0.7pt,anchor=center] at (0.545*\x+0.09,0.55) {};
}
\node at (0,-2) {};
\end{tikzpicture}\vspace{-4em}
\caption{Setup of the linear elasticity example.}
\label{fig:wrench}
\end{figure}

We let the unknown log-Young's modulus follow a Gaussian random field prior $\theta \sim \mathcal{N}(0,C)$, where $C(s,t) = \exp(-\|s-t\|_2^2)$ is a Gaussian covariance function on $\mathcal{D}\times \mathcal{D}$.
After finite element discretization, $\theta$ is replaced with a piecewise constant field whose elementwise values are gathered in a vector $\theta\in\R^{d_\theta}$ where $d_\theta=925$ is the number of elements in the mesh. The parameter $\theta$ thus follows a centered Gaussian prior with covariance matrix $\Sigma_{ij}=C(s_i,s_j)$, where $s_i\in\mathcal{D}$ is the center of the $i$-th element.
We compute the numerical solution $u^\lowsup{h}=u^\lowsup{h}(\theta)$ by the Galerkin projection of $u=u(\theta)$ onto the space of continuous piecewise affine functions over a triangular mesh.
The domain $\mathcal{D}$, the mesh, the boundary conditions, and a sample von Mises stress of the solution are shown in Fig.~\ref{fig:wrench}.
We observe the vertical displacements $u^h_2$ of the $m=26$ points of interests located along the green line where the force $f$ is applied, see Fig.~\ref{fig:wrench}. The perturbed observations are $y=u^h_2+e$ where $e$ is a zero-mean $\mathbb{H}^{1}$-normal noise with the signal-to-noise-ratio $10$.

Instead of directly operating with the high-dimensional discretized parameter $\theta$, we apply the data-free dimension reduction method of \cite{cui2021conditional,cui2021data} to compress the parameter dimensionality. We first construct a sensitivity matrix $H \in \R^{d_\theta \times d_\theta}$ by integrating the Fisher information matrix over the prior distribution, and then compute the eigenpairs $(\varphi_i,\nu_i)$---in descending order of eigenvalues $\nu_i$---of the matrix pencil $(H, C^{-1})$. This gives a reparametrization \(\smash{\theta = \sum_{i = 1}^{d_\theta} \varphi_i x_i}\), where each $x_i$ follows the zero mean standard Gaussian prior. By truncating the reparametrization, i.e., $\smash{\theta = \sum_{i = 1}^{d} \varphi_i x_i}$ with $d < d_\theta$, according to the eigenvalues $\nu_i$, we obtain reduced-dimensional parameterization of the random field. 

We then construct the self-reinforced KR rearrangements to approximate the reduced-dimensional posterior distribution. We truncated the Gaussian prior at $\pm 5$ standard deviations, which leads to a truncated domain of approximation $[-5,5]^{d}$. For both  sparse-polynomial-based and TT-based constructions, we follow the exactly same setup as in the elliptic PDE example.
The Hellinger distances of the resulting maps, the numbers of density evaluations used and the number of layers in the composite maps for each reduced parameter dimension $d$ are reported in Table~\ref{tab:wrench}. Compared to the TT-based construction, the polynomial-based construction has similar accuracy and uses a similar number of density evaluations for $d = 7$. For $d=11$, the polynomial-based construction is slightly less accurate than the TT-based construction. 

\begin{table}[h]
\footnotesize
\caption{Linear elasticity example. The Hellinger distances, numbers of density evaluations and numbers of layers in the composite maps.} \vspace{-6pt}\label{tab:wrench}
\centering
\begin{tabular}{c|ccc|ccc}
$d$ & \multicolumn{3}{c|}{TT}                        & \multicolumn{3}{c}{sparse polynomials} \\
  & $D_H(f_X,f_{\hat X})$ & $N_{evals}$ & $\#\{\beta\}$ & $D_H(f_X,f_{\hat X})$ & $N_{evals}$ & $\#\{\beta\}$ \\\hline
7 & 0.21$\pm$0.024 & 44667$\pm$2083   & 7       & 0.22$\pm$0.020 & 45863$\pm$1077 & 7 \\
11& 0.41$\pm$0.083  & 92155$\pm$385    & 8       & 0.49$\pm$0.033  & 108162$\pm$6473& 8 \\
\end{tabular}
\end{table}

\subsection{Concluding remarks}
In numerical examples, the self-reinforced KR rearrangements constructed using sparse polynomials have similar performance compared to those based on TT. The use of self-reinforced construction significantly improves the approximation power of sparse polynomial approximation. In the CSIR model, without using the self-reinforced construction, it is even infeasible to go beyond two compartments. 
For the elliptic PDE, the single-layered construction can only be applied to cases with less concentrated posterior densities---by increasing the observation noise and reducing the number of observations---while the self-reinforced construction does not suffer from these difficulties. 
We also demonstrate that data-batching is a useful approach for building bridging densities. Although the data-batching approach uses more layers than the tempering approach on the elliptic PDE example, it can be potentially more advantageous for problems where the cost of evaluating the likelihood is proportional to the data dimension. In addition, the data-batching approach is naturally more suitable for sequential experimental designs, where data are observed sequentially; see \cite{huan2015numerical} and reference therein for details.

\section*{Acknowledgments}
TC acknowledges support from the Australian Research Council under the grant DP210103092. SD acknowledges support from the Engineering and Physical Sciences Research Council New Investigator Award EP/T031255/1.
OZ acknowledges support from the ANR JCJC project MODENA (ANR-21-CE46-0006-01).

\appendix

\section{Polynomial bases and distribution functions}\label{sec:polys}
To provide an algebraically exact implementation of the distribution function, here we provide a recipe for the indefinite integral of the form 
\(
    \int \psi_{j}(y)  \lambda(y) \dd y 
\)
for a range of polynomials and weight functions. In the following, we let the weight function be normalized, i.e., $\int_\mathcal{I} \lambda(x) \dd x = 1$, and also let the basis functions be normalized, i.e.,  $\langle \psi_j, \psi_k \rangle_\lambda = \delta(j,k)$.

\subsection{Chebyshev polynomials on \texorpdfstring{$\mathcal{I} = (-1,1)$}{}}\label{sec:cheby1st}

The normalized weight function and the normalized  Chebyshev polynomials of the first kind are defined by 
\begin{equation}\label{eq:cheby1st}
    \lambda(x) = \frac{1}{\pi \sqrt{1-x^2}}, \quad \psi_j(x) = w_j \cos(j \cos^{-1}(x)), \quad j = 0, 1, \ldots,
\end{equation}
with $w_0 = 1$ and $w_j = \sqrt{2}$ for $j \geq 1$. We have the following indefinite integral 
\[
\int  \psi_{j}(x)  \lambda(x) \dd x = \begin{cases} - \frac1\pi \cos^{-1}(x), & j = 0 \vspace{4pt}\\ -\frac{\sqrt{2}}{j \pi} \sin(j \cos^{-1}(x)), & j \geq 1 \end{cases}.
\]

The normalized weight function and the normalized Chebyshev polynomials of the second kind are defined by 
\begin{equation}\label{eq:cheby2nd}
    \lambda(x) = \frac{2\sqrt{1-x^2}}{\pi}, \quad \psi_j(x) = \frac{\sin((j+1) \cos^{-1}(x))}{\sin(\cos^{-1}(x))}, \quad j = 0, 1, \ldots.
\end{equation}
We have the following indefinite integral 
\[
\int  \psi_{j}(x)  \lambda(x) \dd x = \begin{cases} \frac1\pi \left(\frac12 \sin(2 \cos^{-1}(x)) - \cos^{-1}(x) \right), & j = 0 \vspace{4pt}\\ \frac1\pi \left(\frac{1}{j+2}\sin((j+2) \cos^{-1}(x)) - \frac{1}{j}\sin(j \cos^{-1}(x))\right), & j \geq 1 \end{cases}.
\]

\subsection{Legendre polynomials on \texorpdfstring{$\mathcal{I} = (-1,1)$}{}}\label{sec:legendre}

For a density function represented by Legendre polynomials, it is easier to use the collocation method and the Chebyshev polynomials of the second kind to compute its distribution function. 
Using the conditional density in \eqref{eq:one_pdf} as the example, we have
\[
    f_{\hat X_{t} | \hat\bX_{[t{-}1]}}(x_t | \bx_{[t{-}1]}) = \frac1{2\zeta} q(x_t), \quad q(x_t) = c + \sum_{\ell = 1}^{r} \Big( \sum_{\bk \in \mathcal{K}} \psi_{k_t}(x_t)\mC_{\bk\ell}\Big)^2  ,      
\]
where $(\psi_{k_t})$ are Legendre polynomials with the maximum order $n_t$. We define collocation points as the roots of $\varphi_{2n_t+1}(x)$, where $\varphi_j$ is the Chebyshev polynomials of the second kind \eqref{eq:cheby2nd}. This gives $x^j = \cos\big(\frac{j \pi} {2n_t+2}\big)$, $j = 1, \ldots, 2n_t +1$.
This way, by evaluating the function $q(x_t)$ on the collocation points, one can apply the collocation method to represent $\frac1{2\zeta}q(x_t)$ using the Chebyshev basis, $\frac1{2\zeta}q(x_t) = \sum_{j=0}^{2n_t} a_j \, \varphi_j(x_t)$,
where the coefficients $(a_j)_{j=1}^{2n_t}$ can be computed by the fast Fourier transform with $O(n_t\log(n_t))$ operations. Then, the distribution function of $\hat X_t | \hat \bX_{[t-1]}$ becomes
\begin{align*}
    \mathcal{F}_{\hat X_{t} | \hat\bX_{[t{-}1]}} (x_t | \bx_{[t{-}1]}) &  = \sum_{j=0}^{2n_t} \frac{a_j}{j+1} \big(\cos( (j+1) \cos^{-1}(x_t)) + (-1)^{j} \big). 
\end{align*}

\subsection{Hermite polynomials on \texorpdfstring{$\mathcal{I} = (-\infty, \infty)$}{}}\label{sec:hermite}

For Hermite polynomials, the normalized weight function is $\lambda(x) = \frac1{\sqrt{2\pi}} \exp(- \frac12 x^2)$ and the normalized polynomials are $\psi_j(x) = (j !)^{-1/2} p_j(x), j = 0, 1, \ldots$, where $p_j(x)$ are the probabilist's Hermite polynomials specified by the three-term recurrence 
\[
    p_{j+1}(x) = x p_j(x) - j p_{j-1}(x), \quad p_1(x) = x, \quad p_0(x) = 1.
\]
Since the recurrence can also be specified as
\(
    p_{j+1}(x) = x p_j(x) - p_{j}'(x),
\)
we have $( p_{j}(x)\lambda(x) )' = -p_{j+1}(x) \lambda(x)$, which leads to the following indefinite integral 
\[
\int  \psi_{j}(x)  \lambda(x) \dd x = \begin{cases} \frac12 \mathrm{erf}({x}/{\sqrt{2}}), & j = 0 \vspace{4pt}\\ - \frac{w_j}{w_{j-1}}\psi_{j}(x)\lambda(x) , & j \geq 1 \end{cases}.
\]

\subsection{Laguerre polynomials on \texorpdfstring{$\mathcal{I} = [0, \infty)$}{}}\label{sec:laguerre}

For Laguerre polynomials, the normalized weight function is the density of the exponential distribution,
\(
    \lambda(x) = \exp(-x) ,
\)
and the normalized polynomials are specified by the three-term recurrence 
\[
    (j+1)\psi_{j+1}(x) = (2j + 1 - x) \psi_j(x) - j \psi_{j-1}(x), \quad \psi_1(x) = 1-x, \quad \psi_0(x) = 1.
\]
Since the recurrence can also be specified as $x\psi'_j(x) = j ( \psi_j(x) - \psi_{j-1}(x) )$ and the identity $\psi'_j(x) = - \sum_{k = 0}^{j-1} \psi_k(x)$ for $j\geq 1$, we have the following indefinite integral 
\[
\int  \psi_{j}(x)  \lambda(x) \dd x = \begin{cases} -\exp(-x), & j = 0 \vspace{4pt}\\ \frac1j x \exp(-x) \sum_{k = 0}^{j-1} \psi_k(x) , & j \geq 1 \end{cases}.
\]

\section{Implementing the domain mapping in Section \ref{sec:rd}}\label{sec:domain_mapping}
Suppose we have the target density $f_X(x)$, $x \in \R$, and an invertible transformation $z(x)$ (cf. Table \ref{table:mappings}). We  want to show that the following two approaches of approximating the target density using domain mapping are equivalent: ({\romannumeral 1}) applying the square-root approximation procedure of Section \ref{sec:sqrt_approx} to $f$ using the induced weight  $\lambda(x)$ defined in \eqref{eq:extended_weight} and the extended basis functions $(\phi_j)_{j=0}^{\infty}$ defined in \eqref{eq:extended_basis}; and ({\romannumeral 2}) applying the square-root approximation procedure to the pullback density $f_Z(z):=f_X(x(z))x'(z)$ with weight $\lambda^{\rm ref}(z)$ and polynomials $(\psi_j)_{j=0}^\infty$ defined on $(-1,1)$. 

We first the direct approximation approach ({\romannumeral 1}). Using the square-root approximation procedure, one first expresses the target density as 
\begin{equation}\label{eq:inf_ratio}
f_X(x) = p(x) \lambda(x), \quad p(x) = \displaystyle\frac{f_X(x)}{\lambda(x)},
\end{equation}
and then projects the function $\sqrt{p(x)}$ onto the extended basis functions $(\phi_j)_{j=0}^{\infty}$ with respect to the induced weight  $\lambda(x)$, which yields the orthogonal expansion
\begin{equation}\label{eq:inf_proj}
\sqrt{p(x)} = \sum_{j = 0}^{\infty} a_j \phi_j(x), \quad a_j = \int_{-\infty}^\infty \sqrt{p(x)} \phi_j(x) \lambda(x) \dd x.
\end{equation}
This way, the approximation is determined by the coefficients $(a_j)_{j=0}^{\infty}$.

Now we consider the pullback approach ({\romannumeral 2}). To apply the square-root approximation procedure, we express the pullback density $f_Z(z) = f_X(x(z))x'(z)$ as
\begin{equation}\label{eq:bouned_ratio}
    f_Z(z) = q(z) \lambda^{\rm ref}(z), \quad q(z) = \frac{f_Z(z)}{\lambda^{\rm ref}(z)}.
\end{equation}
Then, using the basis functions $(\psi_j)_{j=0}^\infty$ and the weight function $\lambda^{\rm ref}(z)$ on $(-1,1)$, the function $\sqrt{q(z)}$ has an orthogonal expansion 
\begin{equation}\label{eq:bounded_proj}
    \sqrt{q(z)} = \sum_{j = 0}^\infty b_j \psi_j(z), \quad b_j =  \int_{-1}^1 \sqrt{q(z)} \psi_j(z) \lambda^{\rm ref}(z) \dd z .
\end{equation}

Note that the function $q(z)$ can also be expressed as
\[
    q(z) = \frac{f_Z(z)}{\lambda^{\rm ref}(z)} =  \frac{f_X(x(z))x'(z)}{\lambda^{\rm ref}(z)} \frac{\lambda(x(z)) }{\lambda(x(z)) } = \frac{ p(x(z))   }{ \lambda^{\rm ref}(z) } \lambda(x(z)) x'(z)
\]
where $p$ is defined in \eqref{eq:inf_ratio}. By the definition of the induced weight function $\lambda(x)$ and the invertiblity of the map $z'(x)$, we have $\lambda^{\rm ref}(z) = \lambda(x(z)) x'(z)$, and thus $q(z) = p(x(z))$. This way, the coefficients $(a_j)_{j=0}^{\infty}$ can be expressed as
\begin{align*}
a_j  =  \int_{-\infty}^\infty \sqrt{p(x)} \psi_j(z(x)) \lambda^{\rm ref}(z(x)) z'(x) \dd x = \int_{-1}^{1} \sqrt{q(z)} \psi_j(z) \lambda^{\rm ref}(z) \dd z = b_j.
\end{align*}
Thus, the coefficient $(a_j)_{j=0}^\infty$ in \eqref{eq:inf_proj} and $(b_j)_{j=0}^\infty$ in \eqref{eq:bounded_proj} are identical.

\bibliographystyle{siamplain}
\bibliography{references}

\end{document}